\numberwithin{equation}{section} \numberwithin{table}{section}
\numberwithin{figure}{section}
\numberwithin{algorithm}{section}
\begin{document}

\title{Fast multilevel solvers for a class of discrete fourth order parabolic problems}

\author{Bin Zheng \and Luoping Chen \and Xiaozhe Hu \and Long Chen \and \\ Ricardo H. Nochetto \and Jinchao Xu   }

\institute{
B. Zheng \at Fundamental \& Computational Sciences, Pacific Northwest National Laboratory, Richland, WA 99352, USA\\
\email{bin.zheng@pnnl.gov} \\
\and
L.P. Chen \at School of Mathematics, Southwest Jiaotong University, Chengdu 611756, China\\
\email{clpchenluoping@163.com}\\ 
\and
X. Hu \at Department of Mathematics, Tufts University, Medford, MA 02155, USA\\
\email{Xiaozhe.Hu@tufts.edu}
\and 
L. Chen \at Department of Mathematics, University of California, Irvine, CA 92697, USA\\
 \email{chenlong@math.uci.edu}\\
\and
R.H. Nochetto \at Department of Mathematics and Institute for Physical Science and Technology, University of Maryland, College Park MD 20742, USA\\ 
\email{rhn@math.umd.edu}
\and 
J. Xu \at Department of Mathematics, Pennsylvania State University, University Park, PA 16802, USA\\
\email{xu@math.psu.edu}
}
\maketitle

\begin{abstract}
In this paper, we study fast iterative solvers for the solution of fourth order parabolic equations discretized by mixed finite element methods. We propose to use consistent mass matrix in the discretization and use lumped mass matrix to construct efficient preconditioners. We provide eigenvalue analysis for the preconditioned system and estimate the convergence rate of the preconditioned GMRes method. Furthermore, we show that these preconditioners only need to be solved inexactly by optimal multigrid algorithms. Our numerical examples indicate that the proposed preconditioners are very efficient and robust with respect to both discretization parameters and diffusion coefficients. We also investigate the performance of multigrid algorithms with either collective smoothers or distributive smoothers when solving the preconditioner systems. 
\keywords{Fourth order problem \and multigrid method \and GMRes \and mass lumping \and preconditioner}
\end{abstract}

\section{Introduction}
%\label{intro}
Fourth order parabolic partial differential equations (PDEs) appear in many applications including the lubrication approximation for viscous thin films \cite{king2003fourth},  interface morphological changes in alloy systems during stress corrosion cracking by surface diffusion \cite{asaro1972interface}, image segmentation, noise reduction, inpainting \cite{bertozzi2007inpainting, greer2004h}, and phase separation in binary alloys \cite{blowey1991cahn, elliott1989cahn, cahn2004free}, etc. There have been extensive studies on the numerical methods for solving these fourth order parabolic PDEs, including the finite difference methods \cite{furihata2001stable, sun1995second}, spectral methods \cite{ye2005fourier, he2008stability}, discontinuous Galerkin methods \cite{xia2007local,choo2005discontinuous,feng2007fully}, $C^1$-conforming finite element methods \cite{elliott1987numerical,feng2005numerical},  nonconforming finite element methods\cite{elliott1989nonconforming,zhang2010nonconforming}, and mixed finite element methods \cite{du1991numerical, elliott1989second,elliott1992error,feng2004error,barrett1999finite,BElliot1992Cahn-Hilliard}.   The resulting large sparse linear systems of equations are typically very ill-conditioned which poses great challenges for designing efficient and robust iterative solvers. In addition, the linear systems become indefinite when mixed methods are employed. In this work, we design fast iterative solvers for the fourth order parabolic equations discretized by mixed finite element methods.

Multigrid (MG) methods are among the most efficient and robust solvers for solving the linear systems arising from the discretizations of PDEs. There have been a number of studies on multigrid methods for fourth order parabolic equations \cite{kay2006multigrid,Kim2004,wise2007solving,banas2009multigrid,henn2005multigrid}. It is known that smoothers play a significant role in multigrid algorithms. In particular, for saddle point systems, there are two different types of smoothers, i.e., decoupled or coupled smoothers. For a decoupled point smoother, each sweep consists of relaxation over variables per grid point as well as relaxation over grid points. On the other hand, for a coupled point smoother, variables on each grid point are relaxed simultaneously which corresponds to solving a sequence of local problems. Distributive Gauss-Seidel (DGS) is the first decoupled smoother proposed by Brandt and Dinar \cite{brandt1979multi} for solving Stokes equations. Later, Wittum \cite{wittum1999multigrid} introduced transforming smoothers and combined it with incomplete LU factorization to solve saddle-point systems with applications in Stokes and Navier-Stokes equations. Gaspar, Lisbona, Oosterlee and Wienands \cite{gaspar2004systematic} studied the DGS smoother for solving poroelasticity problems. Recently, Wang and Chen \cite{wang2013multigrid} proposed a distributive Gauss-Seidel smoother based on the least squares commutator for solving Stokes equations and showed multigrid uniform convergence numerically. For coupled smoother, in \cite{Vanka1986block}, Vanka proposed a symmetrically coupled Gauss-Seidel smoother for the Navier-Stokes equations discretized by finite difference schemes on staggered grids. Later, in \cite{Olshanskii2002}, Olshanskii and Reusken introduced an MG solver with collective smothers for the Navier-Stokes equations in rotation form discretized by conforming finite elements. They proved that W-cycle MG method with block Richardson smoother is robust for the discrete problem. Their numerical experiments show that the W-cycle MG method with the damped Jacobi smoother is robust with respect to the mesh size and problem parameters. Collective Jacobi and Gauss-Seidel smoothers have also been studied in multigrid method for elliptic optimal control problems by Lass, Vallejos, Borzi and Douglas \cite{Lass2009}, and Takacs and Zulehner \cite{Takacs2011}. In our work, we investigate the performance of both coupled and decouple smoothers for the discrete fourth order problem in the mixed form. 

In the literature, there are also many studies on the Krylov subspace methods and their preconditioners for solving fourth order parabolic problems. In \cite{bansch2004surface,bansch2005finite}, B{\"a}nsch, Morin and Nochetto solved a discrete fourth order parabolic equation by applying the conjugate gradient method on the two-by-two block system in Schur complement form. In \cite{bansch2011preconditioning}, the same authors proposed symmetric and nonsymmetric preconditioners based on operator splitting. Axelsson, Boyanova, Kronbichler, Neytcheva, Do-Quang, and Wu\cite{Axelsson2011,Boyanova2012,Axelsson2013,Boyanova2014} proposed a block preconditioner by adding an extra small term to the $(2, 2)$ block and then followed by a block LU factorization which results in a preconditioned system when eigenvalues have a uniform bound. Bosch, Kay, Stoll, and Wathen \cite{Bosch2014a} studied block triangular preconditioner for the mixed finite element discretization of a modified Cahn-Hilliard equation for binary image inpainting. In their method, the Schur complement is solved by algebraic multigrid method (AMG) using V-cycle and Chebyshev smoother. Moreover, the convergence rate of their method is independent of the mesh size. Same method has been used by Bosch, Stoll, and Benner in \cite{Bosch2014b} to solve Cahn-Hilliard variational inequalities. In\cite{graser2007preconditioned}, Gr{\"a}ser and Kornhuber introduced a preconditioned Uzawa-type iterative method with multigrid solvers for subproblems to solve Cahn-Hilliard equation with an obstacle potential. Based on the Hermitian and skew-Hermitian splitting (HSS) iteration introduced in \cite{bai2003hermitian}, Benzi and Golub \cite{benzi2004preconditioner} proposed a robust preconditioner for the generalized saddle point problems which exhibit similar two-by-two block structure as the discrete fourth order parabolic problems in mixed form. Later, Bai, Benzi, Chen, and Wang \cite{Bai:2013aa} studied preconditioned modified HSS (PMHSS) iteration methods for the block linear systems. In \cite{Bai2013}, Bai, Chen, and Wang further simplified PMHSS and proposed an additive block diagonal preconditioner for a block linear system of equations arising from finite element discretization of two-phase flow problems and elliptic PDE-constrained optimization problems. 

In this work, we construct preconditioners for discrete fourth order parabolic problems based on the mass lumping technique. Mass lumping technique has been widely used in solving time dependent PDEs by the finite element method \cite{HRZ_1976}. It consists of replacing a consistent mass matrix by a diagonal lumped mass matrix so that its inversion at each time step becomes a simple division. The error estimates for lumped mass approximation have been studied in \cite{Ushijima1977, ushijima1979error, chenthomee1985lumped} which show that the order of accuracy for the discretization is preserved. On the other hand, the loss of solution accuracy associated with mass lumping has been studied by Gresho, Lee, and Sani \cite{gresho1978advection} for advection equations and by Niclasen and Blackburn \cite{Niclasen1995} for incompressible Navier-Stokes equations. It is well known that mass lumping may also induce dispersion errors when solving wave equations, see e.g., \cite{mullen_belytschko_1982,mark_christon_1999}. These studies suggest that it is sometimes advantageous to use consistent mass matrix in the discretization schemes. In the study of fourth order parabolic equations, both consistent mass matrix and lumped mass matrix have been widely used. In this work, we choose consistent mass matrix in the finite element discretization to keep the solution accuracy and utilize lumped mass matrix to design efficient preconditioners so that the cost of inverting consistent mass matrix can be alleviated. We prove that GMRes method with mass lumping preconditioner converges when $\tau \geq Ch^2$ for some constant $C$ depending on the diffusion coefficients ($\tau$, $h$ corresponds to time and spatial discretization parameters, respectively). In a special case when the two diffusion operators $A$ and $B$ only differ by a scaling factor, we are able to prove uniform convergence of GMRes method for the preconditioned system without the constraint $\tau \geq Ch^2$. Furthermore, we show that the preconditioner systems can be solved inexactly by geometric multigrid methods with the two different types of smoother discussed previously. By combining the optimality of multigrid methods with the computational efficiency of the mass lumping technique, we obtain very efficient solvers for the discrete fourth order problems. 

The remainder of the paper is organized as follows. In Section 2, we describe the model problem and the corresponding mixed finite element discretization. Next, in Section 3, we describe the multigrid method and the collective Jacobi/Gauss-Seidel smoothers for our model problem. In Section 4, we construct two mass lumping preconditioners. Multigrid method with decoupled smoother are also introduced in this section to solve the preconditioner system approximately. The spectrum bounds of the preconditioned systems and the convergence property of GMRes method are analyzed in Section 5.  Finally, in Section 6, we present numerical experiments to demonstrate the efficiency and robustness of the proposed solvers.

\section{Model Problem and Discretization}
\subsection{Model problem}
We are interested in solving the following fourth order problem: 
\begin{align}\label{model1}
u-\tau\;\mbox{div}(a\nabla v)&=f \ \;\mbox{in}\;\Omega,\\ \label{model2}
\tau\;\mbox{div}(b\nabla u)+v&=g \ \;\mbox{in}\;\Omega,
\end{align}
with boundary conditions
\begin{align}
\label{bd1}
u=v=0,\quad \mbox{on} \   \Gamma_D, \\
\label{bd2}
\nu\cdot a\nabla v=\nu\cdot b\nabla u=0,\quad\mbox{on} \  \Gamma_N.
\end{align}
where $\tau = \sqrt{\triangle t}$ and $\triangle t$ is the time-step size, $\Omega$ is a bounded polyhedral domain in $\mathbb{R}^d$, $d\geq 1$, $\nu$ is the unit outward normal, $\Gamma_D$, $\Gamma_N$ denote the Dirichlet and Neumann boundary part, respectively. We mainly study the Dirichlet boundary condition in this paper, i.e., $\Gamma_N=\emptyset$. The diffusion coefficients $a(x)$ and $b(x)$ are measurable functions satisfying the following continuity and coercivity conditions
$$
\lambda_{a}(x)|\xi|^2\leq\xi^{T}a(x)\xi\leq\Lambda_{a}(x)|\xi|^2,\quad \lambda_{b}(x)|\xi|^2\leq\xi^{T}b(x)\xi\leq\Lambda_{b}(x)|\xi|^2,\; \forall\xi\in\mathbb{R}^{d}.
$$ 
Eqns (\ref{model1}) and (\ref{model2}) may arise from a time semi-discretization of the fourth order parabolic problem
$$
u_t=\mbox{div}(a\nabla(-\mbox{div}(b\nabla u))).
$$
In this case, $f=u^{\text{old}}$ is the solution at the previous time step, and $g=0$.

As an example, consider the following Cahn-Hilliard equations that model the phase separation and coarsening dynamics in a binary alloy:
\begin{equation}
u_t-\mbox{div} (M(u)\nabla (-\epsilon\Delta u+F'(u))) = 0,\;\mbox{in}\;\Omega\times (0, T),
\label{Cahn-Hilliard-fourth-order-form}
\end{equation}
where $\Omega\subset \mathbb{R}^3$ is a bounded domain, $u$ represents the relative concentration of one component in a binary mixture, $M(u)$ is the degenerate mobility, which restricts diffusion of both components to the interfacial region, e.g., $M(u) = u(1-u)$ \cite{barrett1999finite}, $F'(u)$ is the derivative of a double well potential $F(u)$, a typical choice is
$$
F(u) = \frac{1}{4} (u^2-1)^2.
$$
Introducing $v$, defined by $v = \tau(-\epsilon \Delta u + F'(u))$(the chemical potential multiplied by $\tau$), after semi-implicit time discretization, we obtain a splitting of (\ref{Cahn-Hilliard-fourth-order-form}) into a coupled system of second order equations \cite{elliott1989second, elliott1992error,feng2004error}
\begin{align*}
u - \tau\mbox{div}(M(u)\nabla v)& = f,\;\mbox{in}\;\Omega,\\ 
\tau(-\epsilon \Delta u + F'(u)) - v& = 0,\;\mbox{in}\;\Omega.
\end{align*}
Denote $a=M(u)$, $b=\epsilon$, and assume $F'(u)=0$, we can get (\ref{model1}) and (\ref{model2}), which corresponds to the linearization of the nonlinear Cahn-Hilliard equations.

The weak formulation of (\ref{model1})-(\ref{model2}) is: find $u,v\in\mathbb{V}$ such that
\begin{equation}
\begin{array}{ccc}
(u,\phi)+\tau(a\nabla v, \nabla \phi) & = &(f,\phi),\quad \phi\in\mathbb{V}\\
-\tau(b\nabla u, \nabla\psi) + (v,\psi) & = & (g,\psi),\quad \psi\in\mathbb{V},
\end{array}
\label{weak-form}
\end{equation}
where $\mathbb{V}$ is the subspace of $H^1(\Omega)$ associated with the boundary condition (\ref{bd1}). The well-posedness of (\ref{weak-form}) follows from the Lax-Milgram lemma \cite{quarteroni1980mixed}.

\subsection{Finite element discretization}
Let $\mathcal{T}$ be a shape-regular triangulation of $\Omega$, $\mathbb{V}_{\mathcal{T}}$ be the piecewise linear finite element space over $\mathcal{T}$ satisfying the homogeneous boundary condition (\ref{bd1}), and $N_h = {\rm dim(\mathbb{V}_\mathcal{T})}$. The discrete problem for the PDE system (\ref{model1})-(\ref{model2}) is: find $u_h, v_h\in\mathbb{V}_{\mathcal{T}}$, such that $\forall \phi, \psi\in\mathbb{V}_{\mathcal{T}}$
$$
\begin{array}{ccc}
(u_h,\phi)+\tau(a\nabla v_h, \nabla \phi) & = &(f,\phi),\\
-\tau(b\nabla u_h, \nabla\psi) + (v_h,\psi) & = & (g,\psi).
\end{array}
$$
In matrix form, we have
\begin{equation}
\left(
\begin{array}{rc}
\tau A & M\\
M & -\tau B
\end{array}
\right)\left(
\begin{array}{c}
\utilde{v}\\
\utilde{u}
\end{array}
\right) = 
\left(
\begin{array}{r}
\utilde{f}\\
\utilde{g}
\end{array}
\right),\;\;\text{or}\;\;\mathcal{A}
\left(
\begin{array}{c}
\utilde{v}\\
\utilde{u}
\end{array}
\right) = 
\left(
\begin{array}{r}
\utilde{f}\\
\utilde{g}
\end{array}
\right),
\label{2by2-block-matrix-form}
\end{equation}
where $\mathcal{A}$ is a matrix of size $2N_h\times 2N_h$, $M$ is the mass matrix, and $A$, $B$ are the stiffness matrices defined by
$$
M_{i,j}=(\phi_i,\phi_j),\quad A_{i,j}=(a\nabla\phi_i, \nabla\phi_j), \quad B_{i,j}=(b\nabla\phi_i,\nabla\phi_j).
$$
 Eliminating $\utilde{v}$ in (\ref{2by2-block-matrix-form}), we get the following Schur complement equation
\begin{equation}
(M+\tau^2AM^{-1}B)\utilde{u}=\utilde{f}-\tau AM^{-1}\utilde{g}.
\label{schur-complement-form}
\end{equation}
In this paper, we develop efficient preconditioners for the two-by-two block linear systems (\ref{2by2-block-matrix-form}); however, we will solve the Schur complement for preconditioners. In order to avoid inverting mass matrix, we employ the mass lumping technique to construct preconditioners.

\section{Multigrid with Collective Smoother}
\label{sec:MGsolver}
A multigrid algorithm typically consists of three major components: the smoother (or relaxation scheme), the coarse grid operator, and the grid transfer operators (interpolation/restriction operators). It is well known that the efficiency of a multigrid method crucially depends on the choice of the smoother. In particular, for the block system (\ref{2by2-block-matrix-form}), we observe numerically that multigrid method with point-wise Gauss-Seidel smoother does not converge uniformly. We construct a block Jacobi or Gauss-Seidel smoother that collects the degrees of freedom corresponding to variables $u$ and $v$ for each grid point. In other words, each block corresponds to a $2\times 2$ matrix.

To describe these collective smoothers, we first consider the following matrix splitting, i.e., 
$$
\mathcal{A}=\mathcal{A}_L +\mathcal{A}_D + \mathcal{A}_L^T,\;\;
\mathcal{A}_L =
\left(
\begin{array}{cc}
\tau L_A & L_M\\
L_M &-\tau L_B
\end{array}
\right),
\;\;
\mathcal{A}_D =
\left(
\begin{array}{cc}
\tau D_A & D_M\\
D_M &-\tau D_B
\end{array}
\right),
$$
where $L_A , L_B, L_M$ are strictly lower triangular parts of $A$, $B$, and $M$, and $D_A , D_B, D_M$ are their diagonal parts. The collective damped Jacobi relaxation can be represented as
\begin{align}\label{collective-Jacobi-smoother}
\begin{pmatrix}
\utilde{v}^{k+1}\\
\utilde{u}^{k+1}
\end{pmatrix}
=\begin{pmatrix}
 \utilde{v}^{k}\\
\utilde{u}^{k}
\end{pmatrix}
+
\vartheta\mathcal{A}_D^{-1}
\left[\begin{pmatrix}
 \utilde{f}\\
 \utilde{g}
\end{pmatrix}
-\mathcal{A}
\begin{pmatrix}
 \utilde{v}^{k}\\
\utilde{u}^{k}
\end{pmatrix}\right]
\end{align}

The collective Gauss-Seidel relaxation can be represented as
\begin{align}\label{collective-Gauss-Seidel-smoother}
\begin{pmatrix}
\utilde{v}^{k+1}\\
\utilde{u}^{k+1}
\end{pmatrix}
=\begin{pmatrix}
 \utilde{v}^{k}\\
\utilde{u}^{k}
\end{pmatrix}
+
(\mathcal{A}_L+\mathcal{A}_D)^{-1}
\left[\begin{pmatrix}
 \utilde{f}\\
 \utilde{g}
\end{pmatrix}
-\mathcal{A}
\begin{pmatrix}
 \utilde{v}^{k}\\
\utilde{u}^{k}
\end{pmatrix}\right]
\end{align}
More practically, one can rearrange variables $\utilde{u}$ and $\utilde{v}$ in $\utilde{w}$ so that $\mathcal{A}_L+\mathcal{A}_D$ corresponds to a lower block triangular matrix. In fact, let $\utilde{w}=(\utilde{w}_1, \utilde{w}_2, \dots, \utilde{w}_{N_h})^T$ with each entry $\utilde{w}_i=(v_i, u_i)$ corresponds to a pair of variables. Then, each relaxation sweep of (\ref{collective-Jacobi-smoother}) consists of solving a sequence of small systems
\begin{align*}
\utilde{w}_i^{k+1} = \utilde{w}^{k}_i +\vartheta \mathscr{A}^{-1}_{ii}\left(F_i-\sum_{j=1}^{N_h}\mathscr{A}_{ij}\utilde{w}^{k}_j\right),
\end{align*}
and each sweep of (\ref{collective-Gauss-Seidel-smoother}) consists of solving
\begin{align*}
\utilde{w}_i^{k+1} = \utilde{w}^{k}_i +\mathscr{A}^{-1}_{ii}\left(F_i-\sum_{j=1}^{i-1}\mathscr{A}_{ij}\utilde{w}^{k+1}_j-\sum_{j=i}^{N_h}\mathscr{A}_{ij}\utilde{w}^{k}_j\right),
\end{align*} 
where
$$
\mathscr{A}_{ij} = 
\begin{pmatrix}
\tau A _{ij}& M_{ij}\\
M_{ij} & -\tau B_{ij}
\end{pmatrix},\;\;
F_i = 
\begin{pmatrix}
f_i \\
g_i
\end{pmatrix},\quad i, j = 1, 2, \cdots, N_h.
$$
Note that $\mathscr{A}_{ii}$ is invertible since $\text{det} (\mathscr{A}_{ii}) =  -\tau^2 A_{ii} B_{ii} - M_{ii}^2 \neq 0$.

It is clear from the above form that for collective damped Jacobi relaxation, these small system can be solved in parallel, and for collective Gauss-Seidel relaxation, they are solved successively. Numerical experiments in Section \ref{sec:Numerical experiments} indicate that by relaxing both variables $u_i,v_i$ corresponding to the same grid point $i$ collectively, geometric multigrid V-cycle converges uniformly with respect to both $h$ and $\tau$.

The collective Gauss-Seidel smoother has been studied by Lass, Vallejos, Borzi and Douglas in \cite{Lass2009} for solving elliptic optimal control problems. It is shown in \cite{Lass2009} that the convergence rate of multigrid method with collective Gauss-Seidel smoother is independent of $h$. The robustness with respect to $\tau$ is, however, hard to prove theoretically. Takacs and Zulehner \cite{Takacs2011} also investigated multigrid algorithm with several collective smoothers for optimal control problems and proved the convergence of the W-cycle multigrid with collective Richardson smoother. For convergence proof of multigrid methods using collective smoothers applied to saddle point systems, we refer to the work by Sch\"{o}berl \cite{Schoberl:1999aa} and by Chen \cite{Chen:aa,Chen:2015aa}.

\section{Mass Lumping Preconditioners}
\label{sec:preconditioner}
In this section, we study preconditioners for the two-by-two  block linear systems (\ref{2by2-block-matrix-form}) for use with GMRes method. When the meshsize and time-step size are small, these systems are generally very ill-conditioned, especially when the diffusion coefficients $a$ and $b$ degenerate. Hence, efficient preconditioners are necessary in order to speed up the convergence of GMRes method. In \cite{bansch2011preconditioning}, B\"{a}nsch, Morin, and Nochetto proposed symmetric and non-symmetric preconditioners that work well for the Schur complement system (\ref{schur-complement-form}). However, the convergence rates of these methods are not uniform with respect to $h$ or $\tau$. Besides, the performance of those methods deteriorate for degenerate problems \cite{bansch2011preconditioning}. 

In the following, we design two preconditioners based on the mass lumping technique and geometric multigrid method for the system (\ref{2by2-block-matrix-form}). The main focus is on the efficiency and robustness of the proposed solvers. Numerical experiments in Section \ref{sec:Numerical experiments} indicate that GMRes method preconditioned by the mass lumping preconditioners (solved inexactly) converges uniformly with respect to the discretization parameters and is also robust for problems with degenerate diffusion coefficients. 

\subsection{Preconditioner with two lumped mass matrices}
\label{mass-lumping-preconditioner}
The mass lumping technique has been widely used in the finite element computations, especially for time-dependent problems as it avoids inverting a full mass matrix $M$ at each time step. The lumped-mass matrix $\bar{M}$ is a diagonal matrix with diagonal elements equal to the row sums of $M$. By using the diagonal matrix $\bar{M}$, the computational cost for the preconditioner is reduced significantly. 

The mass lumping preconditioner $\mathcal{B}$ for the block system $\mathcal{A}$ is defined by
\begin{equation}
\mathcal{B} = \begin{pmatrix}
 \tau A & \bar M\\
 \bar M & -\tau B
\end{pmatrix}.
\label{mass-lumping-preconditioner-block-system}
\end{equation}
GMRes method is then applied to solve the preconditioned linear system
 \begin{equation}
\begin{pmatrix}
\tau A & {M}\\
 {M}& -\tau B 
\end{pmatrix}
\begin{pmatrix}
\tau A & \bar{M}\\
 \bar{M}& -\tau B 
\end{pmatrix}^{-1}
\left(
\begin{array}{c}
\utilde{p}\\
\utilde{q}
\end{array}
\right)=
\left(\begin{array}{c}
\utilde{f}\\
\utilde{g}
\end{array}\right).
\end{equation}\label{B-preconditioner}

\subsection{Preconditioner with one lumped mass matrix}
\label{modified-preconditioner}
We can also use the following preconditioner $\tilde{\mathcal{B}}$ with one lumped mass matrix for solving the block system (\ref{2by2-block-matrix-form}), i.e.
\begin{equation}
\tilde{\mathcal{B}}= \begin{pmatrix}
\tau A & {M}\\
 \bar{M}& -\tau B 
\end{pmatrix}.
\label{modified-mass-lumping-preconditioner-block}
\end{equation}
The matrix $\tilde{\mathcal{B}}$ is nonsymmetric and is a better approximation to $\mathcal{A}$ compared with $\mathcal{B}$. For $\tilde{\mathcal{B}}$, we still have a block factorization which avoids inverting mass matrix $M$.  Numerical experiments of Section \ref{subsec:6.2} indicate that the performance of the two preconditioners $\tilde{\mathcal{B}}$ and $\mathcal{B}$ used with GMRes method are similar when solved inexactly by geometric multigrid V-cycle when $\tau\geq Ch^2$. However, when $\tau$ is very small, $\tilde{\mathcal{B}}$ performs better than $\mathcal{B}$. 

\subsection{Multigrid for Preconditioners}
\label{subsection:mg_dgs}
The preconditioner systems only need to be solved approximately. We can use geometric multigrid method with the coupled smoothers described in Section \ref{sec:MGsolver}. In the following, we construct a decoupled smoother following the idea of the distributive Gauss-Seidel relaxation (DGS) which is suitable for the model problem.

DGS is a decoupled smoother introduced by Brandt in \cite{brandt1979multi} for solving Stokes equations. The main idea of DGS is to apply standard Gauss-Seidel relaxation on decoupled equations using transformed variables. Let us consider the mass-lumping preconditioner system $\tilde{\mathcal{B}}$ (a similar scheme can be derived for $\mathcal{B}$),
\begin{equation}
\tilde{\mathcal{B}}
\left(
\begin{array}{c}
\utilde{v}\\
\utilde{u}
\end{array}
\right)
=\left(
\begin{array}{c}
\utilde{p}\\
\utilde{q}
\end{array}
\right).
\label{modified-preconditioner-system}
\end{equation}

We introduce the following change of variables
\begin{equation}
\left(
\begin{array}{c}
\utilde{v}\\
\utilde{u}
\end{array}
\right)
=\mathcal{P}
\left(
\begin{array}{c}
\utilde{x}\\
\utilde{y}
\end{array}
\right),\;\;\text{where}\;\;
\mathcal{P}
=\left(
\begin{array}{cc}
\tau\bar{M}^{-1}B & 0\\
I & I
\end{array}
\right)
\label{change-of-variables}
\end{equation}
is called the distribution matrix. Right preconditioning $\tilde{\mathcal{B}}$ by $\mathcal{P}$ results in an upper block triangular matrix
\begin{align*}
\tilde{\mathcal{B}}\mathcal{P}
=
\left(
\begin{array}{cc}
M+\tau^2A\bar M^{-1}B & M\\
 0 & -\tau B
\end{array}
\right).
\end{align*}

We construct a decoupled smoother for preconditioner (\ref{modified-preconditioner-system}) by solving
\begin{equation}\label{DGS-relaxation}
\begin{pmatrix}
\utilde{v}^{k+1}\\
\utilde{u}^{k+1}
\end{pmatrix}
=
\begin{pmatrix}
\utilde{v}^{k}\\
\utilde{u}^{k}
\end{pmatrix}+
\mathcal{P}(\tilde{\mathcal{B}}\mathcal{P})^{-1}
\left[
\begin{pmatrix}
\utilde{p}\\
\utilde{q}
\end{pmatrix}
-
\tilde{\mathcal{B}}
\begin{pmatrix}
\utilde{v}^k\\
\utilde{u}^k
\end{pmatrix}
\right]
\end{equation}
More precisely, we have the following algorithm
\vspace{0.1cm}

\par
\vspace{0.1cm}
\framebox{
\centering
\parbox{12cm}{
\begin{enumerate}
\item[1.] Form the residual: 
$$
\begin{pmatrix}
 \utilde{r}_v\\
\utilde{r}_u
\end{pmatrix}
= \left[\begin{pmatrix}
\utilde{p}\\
\utilde{q}
\end{pmatrix}
-{\tilde{\mathcal{B}}}
\begin{pmatrix}
 \utilde{v}^{k}\\
 \utilde{u}^{k}
\end{pmatrix}\right]
$$
\item[2.] Apply standard Gauss-Seidel relaxation to solve the error equation
$$
-\tau B\utilde{e}_y=\utilde{r}_u
$$
and use the damped Jacobi relaxation for
\begin{equation}
(M+\tau^2A\bar M^{-1}B)\utilde{e}_x = \utilde{r}_v-M\utilde{e}_y.
\label{fourth-order-in-DGS}
\end{equation}
Then, recover $\utilde{e}_u,\utilde{e}_v$ from (\ref{change-of-variables}), i.e., 
\begin{align*}
\utilde{e}_v &= \tau\bar M^{-1}B\utilde{e}_x;\\
\utilde{e}_u &=\utilde{e}_x+\utilde{e}_y
\end{align*}
\item[3.] Update the solution
\begin{equation*}
\begin{array}{cc}
\utilde{v}^{k+1} = \utilde{v}^{k}+\utilde{e}_v\\
\utilde{u}^{k+1} = \utilde{u}^{k}+ \utilde{e}_u
\end{array}
\end{equation*}
\end{enumerate}
}
}
\vspace{0.3cm}

We can use multigrid method with the above decoupled smoother to solve the preconditioner system (\ref{modified-preconditioner-system}).

\begin{remark}
By using the damped Jacobi to solve (\ref{fourth-order-in-DGS}), we can avoid matrix multiplication. In fact, we only need to calculate the diagonal part of $M+\tau^2A\bar{M}^{-1}B$. This requires $\mathcal{O}(N_h)$ operations because $\bar{M}$ is a diagonal matrix.
\end{remark}

\section{Convergence Analysis of Preconditioned GMRes Method}
\label{sec:convergence}
In this section, we analyze the convergence of the GMRes method preconditioned by the two 
preconditioners introduced in Section \ref{sec:preconditioner} for the block system (\ref{2by2-block-matrix-form}). We show that
 the preconditioned GMRes method converges when $\tau\geq Ch^2$ for some constant $C$ depending on the diffusion coefficients. Numerical results in Section \ref{sec:Numerical experiments} indicate that the preconditioned GMRes method converges uniformly for any $h$ and $\tau$.  

Let $R=\mathcal{A}\mathcal{B}^{-1}$ (or $\mathcal{A}\tilde{\mathcal{B}}^{-1}$) with $\mathcal{B}$ (or $\tilde{\mathcal{B}}$) being a right preconditioner of $\mathcal{A}$. To solve the preconditioned system $Rv=b$, the GMRes method starts from an initial iterate $v_0$ and produces a sequence of iterates
$v_k$ and residuals $r_k := b-Rv_k$, such that  $r_k = p_k(R) r_0$ for some polynomial $p_k\in\mathcal{P}_k$, and

\begin{equation}\label{convergrate}
\|r_k\|_2 = \min_{\substack{p\in \mathcal{P}_k\\ p(0)=1}}\|p(R) r_0\|_2,
\end{equation}
where $\mathcal{P}_k$ is the space of polynomials of degree $k$ or less and $\|\cdot\|_2$ is the Euclidean norm. As a consequence, the convergence rate of the GMRes method can be estimated by

\begin{equation}\label{rate4gmres}
\frac{\|r_k\|_2}{\|r_0\|_2}\leq \min_{\substack{p\in \mathcal{P}_k\\ p(0)=1}}\|p(R)\|_2 .
\end{equation}

Since $R$ may not be a normal matrix, we follow the approach given in \cite{bansch2011preconditioning} using the concept of $\epsilon$-pseudospectrum \cite{trefethen2005spectra}. Given $\epsilon>0$, denote by $\sigma_{\epsilon}(R)$ the set of $\epsilon$-eigenvalues of $R$, namely those $z\in\mathbb{C}$ that are eigenvalues of some matrix $R+E$ with $\|E\|\leq\epsilon$. 
We first quote the following two results.

\begin{lemma}[Pseudospectrum estimate \cite{trefethen2005spectra}]{\label{max-poly}} Let $\Sigma_\epsilon$ be a union of closed curves enclosing the $\epsilon-$pseudospectrum $\sigma_{\epsilon}(R)$ of $R$. Then for any polynomial $p_k$ of degree $k$ with $p_k(0)=1$ we have
\begin{equation}\label{max-poly-f}
\max_{z\in\sigma(R)}|p_k(z)|\leq\|p_k(R)\|\leq\frac{L_{\epsilon}}{2\pi\epsilon}\max_{z\in\Sigma_{\epsilon}(R)}|p_k(z)|,
\end{equation}
where $L_{\epsilon}$ is the arclength of $\Sigma_{\epsilon}$.
\end{lemma}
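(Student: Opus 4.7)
The plan is to prove the two inequalities separately, since the left one follows from elementary spectral mapping while the right one requires the Dunford--Taylor holomorphic functional calculus combined with the resolvent characterization of the $\epsilon$-pseudospectrum.

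First, for the lower bound $\max_{z\in\sigma(R)}|p_k(z)|\leq\|p_k(R)\|$, I would pick any $\lambda\in\sigma(R)$ together with a unit eigenvector $v$. Because $p_k$ is a polynomial, $p_k(R)v=p_k(\lambda)v$, so $|p_k(\lambda)|=\|p_k(R)v\|_2\leq\|p_k(R)\|$, and maximizing over $\lambda\in\sigma(R)$ gives the stated inequality. This half is essentially a one-line spectral-mapping argument.

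For the upper bound the idea is to represent $p_k(R)$ via the Cauchy integral
$$p_k(R)=\frac{1}{2\pi i}\oint_{\Sigma_\epsilon}p_k(z)\,(zI-R)^{-1}\,dz,$$
which is legitimate because $p_k$ is entire and $\Sigma_\epsilon$ encloses $\sigma(R)\subset\sigma_\epsilon(R)$. Taking norms under the integral and applying the standard length-times-maximum estimate yields
$$\|p_k(R)\|\leq\frac{1}{2\pi}\,L_\epsilon\,\max_{z\in\Sigma_\epsilon}|p_k(z)|\,\max_{z\in\Sigma_\epsilon}\|(zI-R)^{-1}\|,$$
so the only remaining task is to show that $\|(zI-R)^{-1}\|\leq 1/\epsilon$ uniformly along $\Sigma_\epsilon$.

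The crux of the argument, and the principal obstacle, is establishing this resolvent bound on the contour. The key intermediate fact is the equivalence between the perturbative definition of the pseudospectrum used in the paper and the resolvent characterization $\sigma_\epsilon(R)=\{z\in\mathbb{C}:\|(zI-R)^{-1}\|\geq 1/\epsilon\}$, with the convention $\|(zI-R)^{-1}\|=\infty$ on $\sigma(R)$. One direction is a Neumann-series argument: if $\|(zI-R)^{-1}\|<1/\epsilon$ and $\|E\|\leq\epsilon$, then $zI-(R+E)=(zI-R)(I-(zI-R)^{-1}E)$ is invertible, so $z\notin\sigma_\epsilon(R)$. The reverse direction is obtained by constructing, from near-maximizers $u,w$ of the resolvent norm, a rank-one perturbation $E$ of norm at most $\epsilon$ for which $z$ is an eigenvalue of $R+E$. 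Since $\Sigma_\epsilon$ lies on or outside the boundary of $\sigma_\epsilon(R)$, the bound $\|(zI-R)^{-1}\|\leq 1/\epsilon$ holds along the contour, and inserting it into the previous display completes the proof.
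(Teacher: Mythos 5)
Your proposal is correct. Note that the paper does not prove this lemma at all --- it is quoted verbatim from the cited reference on spectra and pseudospectra --- so there is no in-paper argument to compare against; your write-up essentially reproduces the standard proof from that source. Both halves are sound: the lower bound is the one-line eigenvector computation $p_k(R)v=p_k(\lambda)v$, and the upper bound correctly combines the Cauchy integral representation of $p_k(R)$ over $\Sigma_\epsilon$ (valid since $\sigma(R)\subset\sigma_\epsilon(R)$ lies inside the contour) with the length-times-sup estimate and the resolvent bound $\|(zI-R)^{-1}\|\leq 1/\epsilon$ on $\Sigma_\epsilon$. Your identification of the resolvent characterization of $\sigma_\epsilon(R)$ as the crux is apt, and both directions of that equivalence (the Neumann-series argument and the rank-one perturbation built from a maximizer of the resolvent norm, which in finite dimensions is attained) are standard and correctly sketched.
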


\begin{lemma}[Bound on the $\epsilon$-pseudospectrum \cite{bansch2011preconditioning}] \label{p-bound}If $R$ is a square matrix of order $n$ and $0<\epsilon\leq 1$, then, 
$$
\sigma_{\epsilon}(R)\subset\cup_{\lambda\in\sigma(R)}B(\lambda, C_R\epsilon^{\frac{1}{m}}),
$$
where $C_R:=n(1+\sqrt{n-p})\kappa(V)$, with $\kappa(V)$ the condition number of $V$ and $V$ is a nonsingular matrix transforming $R$ into its Jordan canonical form $J$, i.e. $V^{-1}RV=J$, $p$ is the number of Jordan blocks, and $m$ is the size of the largest Jordan block of $R$.
\end{lemma}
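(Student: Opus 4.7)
The plan is to prove the inclusion through the standard resolvent characterization of the pseudospectrum combined with the Jordan decomposition of $R$.

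I would first establish the classical equivalence
\begin{equation*}
z\in\sigma_{\epsilon}(R)\quad\Longleftrightarrow\quad \|(zI-R)^{-1}\|\geq 1/\epsilon,
\end{equation*}
with the convention that the resolvent norm equals $+\infty$ on $\sigma(R)$. The nontrivial direction is that if $(R+E)v=zv$ with $v\neq 0$, $\|E\|\leq\epsilon$, and $z\notin\sigma(R)$, then $v=(zI-R)^{-1}Ev$ forces $1\leq\epsilon\|(zI-R)^{-1}\|$. By contraposition, it suffices to show that whenever $d:=\mathrm{dist}(z,\sigma(R))>C_R\epsilon^{1/m}$ one has $\|(zI-R)^{-1}\|<1/\epsilon$.

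I would then exploit the Jordan decomposition $R=VJV^{-1}$, $J=D+N$, where $D$ is the diagonal matrix of eigenvalues (repeated with algebraic multiplicities) and $N$ is the strictly upper-triangular nilpotent part. Since $N$ contains exactly $n-p$ ones, located on super-diagonal slots within Jordan blocks, one has $\|N\|_F=\sqrt{n-p}$, $\|N\|_2=1$, and $N^m=0$. Factoring $zI-J=(zI-D)\bigl(I-(zI-D)^{-1}N\bigr)$ and using nilpotency, the Neumann series terminates and yields
\begin{equation*}
(zI-J)^{-1}=(zI-D)^{-1}\sum_{k=0}^{m-1}\bigl[N(zI-D)^{-1}\bigr]^k.
\end{equation*}
Using $\|(zI-D)^{-1}\|_2=1/d$ together with the mixed-norm estimate $\|AB\|_F\leq\|A\|_F\|B\|_2$, the $k=0$ term contributes at most $\sqrt{n}/d$ (since $\|I\|_F=\sqrt{n}$), while each $k\geq 1$ term contributes at most $\sqrt{n-p}/d^{k+1}$. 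In the decisive regime $d\leq 1$ every such distance power is dominated by $1/d^m$, and a clean accounting then yields $\|(zI-J)^{-1}\|_F \leq n(1+\sqrt{n-p})/d^m$. Combining this with $\|(zI-R)^{-1}\|\leq\kappa(V)\|(zI-J)^{-1}\|$ and requiring the right-hand side to be strictly less than $1/\epsilon$ gives the threshold $d^m>C_R\epsilon$, which since $C_R\geq 1$ is implied by the stronger condition $d>C_R\epsilon^{1/m}$, as desired.

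The main obstacle is the constant-tracking in the preceding paragraph: to land on exactly $n(1+\sqrt{n-p})\kappa(V)$, one must separate the $k=0$ term (whose $\sqrt{n}$ is absorbed into $n$ after collapsing the geometric prefactor $\sum_k 1/d^k$ into $1/d^{m-1}$ using $\epsilon\leq 1$) from the $k\geq 1$ terms (which supply $\sqrt{n-p}$ via $\|N\|_F$), and to use $m\leq n$ to absorb the residual integer factor $m-1$. The hypothesis $0<\epsilon\leq 1$ is what legitimizes these distance-power collapses, and it is precisely the source of the $1/m$ exponent in the final bound.
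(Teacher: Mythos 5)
The paper itself gives no proof of this lemma: it is explicitly ``quoted'' from \cite{bansch2011preconditioning}, so there is no in-paper argument to compare against. Your proposal is correct and is, in substance, the standard resolvent-based proof that the cited reference relies on: the equivalence $z\in\sigma_{\epsilon}(R)\Leftrightarrow\|(zI-R)^{-1}\|\geq 1/\epsilon$, the terminating Neumann series for $(zI-J)^{-1}$, and the mixed Frobenius/spectral-norm bookkeeping with $\|N\|_F=\sqrt{n-p}$, $\|N\|_2\leq 1$, $\sqrt{n}\leq n$ and $m-1\leq n$ do deliver $\|(zI-J)^{-1}\|\leq n(1+\sqrt{n-p})/d^{m}$ when $d=\mathrm{dist}(z,\sigma(R))\leq 1$, and the implication $d>C_R\epsilon^{1/m}\Rightarrow d^{m}>C_R\epsilon$ is valid because $C_R\geq 1$. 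The one step you leave implicit is the regime $d>1$, where the powers $1/d^{k+1}$ are dominated by $1/d$ rather than $1/d^{m}$; there the same bound gives $\|(zI-R)^{-1}\|\leq C_R/d$ and the conclusion follows from $d>C_R\epsilon^{1/m}\geq C_R\epsilon$ (using $\epsilon\leq 1$), so this is a one-line addendum rather than a gap. I would only add that your final inequality yields $d\leq C_R\epsilon^{1/m}$, so the balls $B(\lambda,C_R\epsilon^{1/m})$ should be read as closed, as they are in the source.
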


In order to estimate the error caused by mass lumping, we introduce the following operators \cite{chenthomee1985lumped}. Let $z_j (1\leq j\leq 3)$ be the vertices of a triangle $K \in\mathcal{T}$, consider the following quadrature formula
\begin{align*}
Q_{K,h}(f) &=\sum_{j=1}^3(1,\phi_j|_{K})f(z_j)= \frac{1}{3}|K|\sum_{j=1}^3f(z_j) \\
(\phi_i,\phi_j)_h &= \sum_{K\in\mathcal{T}}Q_{K,h}(\phi_i\phi_j)=\int_{\Omega}\pi_h(\phi_i\phi_j)dx
\end{align*}
where $\pi_h: C^0(\bar{\Omega})\rightarrow \mathbb{V}_{\mathcal{T}}$ is the standard nodal interpolation operator. Then, the mass lumping procedure can be interpreted as
$$
(\phi_j,\phi_j)_h = Q_{K,h}(\phi_j\phi_j) = \sum_{k=1}^{N_h}(\phi_j,\phi_k).
$$
The following results are useful for later analysis.
\begin{lemma}[quadrature error \cite{chenthomee1985lumped}] \label{lumpmass-mass-error}
Let $u,v\in \mathbb{V}_{\mathcal{T}}\subset H_0^1$ and 
$
(u,v)_h
$
be the lumped inner product. Then, 
\begin{align}
|(u,v)-(u,v)_h|\leq C_l h^2|u|_1|v|_1,
\label{property3-M}
\end{align}
where $C_l$ is a constant independent of $h$.
\end{lemma}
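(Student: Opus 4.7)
The plan is to reduce the global estimate to an elementwise quadrature-error bound, exploit the fact that the trapezoidal (three-vertex) quadrature on a triangle is exact on $P_1(K)$, and then sum over elements using Cauchy--Schwarz.

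First I would write the error as a sum over elements. For each $K\in\mathcal{T}$, set
\begin{equation*}
E_K(f) := \int_K f\,dx - Q_{K,h}(f),
\end{equation*}
so that $(u,v)-(u,v)_h = \sum_{K\in\mathcal{T}} E_K(uv)$. Because the three vertex weights $|K|/3$ integrate constants correctly and the quadrature is symmetric under affine permutations of the vertices, $Q_{K,h}$ is exact on $P_1(K)$. This is the crucial algebraic fact that kills the $O(h^0)$ and $O(h^1)$ terms in the error.

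Next I would obtain the local estimate $|E_K(uv)|\le C h_K^2 |u|_{1,K} |v|_{1,K}$. The standard route is a Bramble--Hilbert argument on the reference triangle $\hat K$: the functional $\hat f\mapsto \hat E_{\hat K}(\hat f)$ is bounded on $H^2(\hat K)$ and vanishes on $P_1(\hat K)$, hence $|\hat E_{\hat K}(\hat f)|\le C\,|\hat f|_{2,\hat K}$. Pulling back to $K$ via an affine map and using shape regularity produces $|E_K(g)|\le C h_K^2 |g|_{2,K}$ for $g\in H^2(K)$. I would then apply this with $g=uv$. Since $u|_K,v|_K$ are affine, direct computation gives $D^2(uv)=\nabla u\otimes\nabla v+\nabla v\otimes\nabla u$ pointwise on $K$ with $\nabla u,\nabla v$ constant there, so
\begin{equation*}
|uv|_{2,K} \le C\,|\nabla u|_K\,|\nabla v|_K\,|K|^{1/2} = C\,|u|_{1,K}\,|v|_{1,K}\,|K|^{-1/2}\cdot|K|^{1/2},
\end{equation*}
where in the last step I use the identity $|w|_{1,K}=|\nabla w|_K|K|^{1/2}$ valid for $w$ linear on $K$. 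Combining these yields the desired local bound.

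Finally I would sum over elements and apply discrete Cauchy--Schwarz:
\begin{equation*}
|(u,v)-(u,v)_h| \le \sum_{K\in\mathcal{T}} C h_K^2 |u|_{1,K}|v|_{1,K} \le C h^2 \Bigl(\sum_K |u|_{1,K}^2\Bigr)^{1/2}\Bigl(\sum_K |v|_{1,K}^2\Bigr)^{1/2} = C_l h^2 |u|_1 |v|_1,
\end{equation*}
with $h=\max_K h_K$. The only real obstacle is the Bramble--Hilbert step, i.e.\ verifying that the quadrature-error functional is continuous on $H^2(\hat K)$ and annihilates $P_1(\hat K)$; both are routine but must be stated, and one must invoke shape regularity so that the constant $C_l$ is independent of the individual triangles in $\mathcal{T}$.
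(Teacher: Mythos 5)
The paper does not prove this lemma at all: it is quoted verbatim from Chen--Thom\'ee \cite{chenthomee1985lumped}, so there is no internal proof to compare against. Your strategy --- elementwise decomposition, exactness of the vertex rule on $P_1(K)$, Bramble--Hilbert on the reference triangle, affine scaling, and a final discrete Cauchy--Schwarz --- is exactly the standard route by which this estimate is established, and the conclusion is correct.

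There is, however, a concrete bookkeeping error in the powers of $|K|^{1/2}$ that you should repair, because as written one of your displayed inequalities is false. The correctly scaled Bramble--Hilbert estimate is $|E_K(g)|\le C\,h_K^2\,|K|^{1/2}\,|g|_{2,K}$ (the determinant factor $|\det B_K|\sim|K|$ from the change of variables is only half cancelled by the pullback of the $H^2$ seminorm); you state it without the $|K|^{1/2}$. Conversely, for $u,v$ affine on $K$ one has $|\nabla u|=|u|_{1,K}|K|^{-1/2}$ applied \emph{twice}, so $|uv|_{2,K}\le C\,|\nabla u|\,|\nabla v|\,|K|^{1/2}=C\,|u|_{1,K}|v|_{1,K}\,|K|^{-1/2}$, not $C\,|u|_{1,K}|v|_{1,K}$ as you claim (the latter fails for small $K$: take $u=v=x_1/\epsilon$ on a triangle of diameter $\epsilon$). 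The two misplaced factors of $|K|^{1/2}$ cancel in the product, so your final local bound $|E_K(uv)|\le C h_K^2|u|_{1,K}|v|_{1,K}$ is right, but the argument as stated does not establish it: if one corrects only the false inequality and keeps your version of the scaled Bramble--Hilbert bound, the chain yields merely $O(h_K)$ locally. A cleaner way to avoid the issue entirely is to note that, by the paper's own identity $Q_{K,h}(f)=\int_K\pi_h f$, one has $E_K(uv)=\int_K\bigl(uv-\pi_h(uv)\bigr)$, and then invoke the $L^1$ nodal interpolation error $\|w-\pi_h w\|_{L^1(K)}\le C h_K^2\|D^2w\|_{L^1(K)}$ together with $\|D^2(uv)\|_{L^1(K)}\le 2|\nabla u||\nabla v||K|=2|u|_{1,K}|v|_{1,K}$, which gives the local bound with no reference-element scaling at all.
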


\begin{lemma}[Norm equivalence \cite{elliott1989second}] 
Let $M$ be the mass matrix and $\bar{M}$ be its lumped version. Then, for any $u=\textstyle{\sum}_j u_j\phi_j\in\mathbb{V}_\mathcal{T}$ we have
\begin{equation}\label{property1-M}
C_1(\bar{M}\utilde u,\utilde u)\leq(M\utilde u,\utilde u)\leq (\bar{M}\utilde u,\utilde u),
\end{equation}
where $\utilde{u} = (u_1,u_2,\cdots,u_{N_h})^{T}$ and $C_1\in(0,1)$ is a constant independent of $h$. 
\end{lemma}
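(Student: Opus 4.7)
The plan is to reduce the global inequality to an elementwise one and then to a fixed inequality on a reference simplex. Observe that
\begin{align*}
(M\utilde u,\utilde u) \;=\; \|u\|_{L^2(\Omega)}^2 \;=\; \sum_{K\in\mathcal{T}}\|u\|_{L^2(K)}^2,\qquad (\bar M\utilde u,\utilde u) \;=\; (u,u)_h \;=\; \sum_{K\in\mathcal{T}} Q_{K,h}(u^2),
\end{align*}
since the $j$th diagonal entry of $\bar M$ is the row sum of $M$, which for a $P_1$ basis agrees with $(\phi_j,\phi_j)_h$ (the vertex quadrature is exact on constants and $\sum_k \phi_k|_K\equiv 1$). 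Thus it suffices to prove that for each $K\in\mathcal{T}$ and every $u\in P_1(K)$,
\begin{align*}
C_1\,Q_{K,h}(u^2) \;\le\; \|u\|_{L^2(K)}^2 \;\le\; Q_{K,h}(u^2),
\end{align*}
with a constant $C_1\in(0,1)$ depending only on the spatial dimension.

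Next I would map $K$ to a reference simplex $\hat K$ by the affine transformation $F_K$. Both quadratic forms pull back with the same Jacobian, namely $\|u\|_{L^2(K)}^2=|\det DF_K|\,\|\hat u\|_{L^2(\hat K)}^2$ and $Q_{K,h}(u^2)=|\det DF_K|\,Q_{\hat K,h}(\hat u^2)$, because the vertex quadrature only involves nodal values and the factor $|K|/(d+1)$ carries the volume scaling. Consequently the desired inequality on $K$ is equivalent to the same inequality on $\hat K$, with a constant that is manifestly independent of $K$ and of $h$.

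On the reference simplex the inequality becomes a spectral comparison of two $(d+1)\times(d+1)$ matrices. In the nodal basis the local mass matrix is $\hat M_K = c_d\,(I+\mathbf 1\mathbf 1^T)$ with $c_d=|\hat K|/((d+1)(d+2))$, while the local lumped matrix is $\hat{\bar M}_K = c_d(d+2)\,I$ (the common row sum of $\hat M_K$ placed on the diagonal). The upper bound follows from
\begin{align*}
\hat{\bar M}_K-\hat M_K \;=\; c_d\bigl((d+1)I-\mathbf 1\mathbf 1^T\bigr)\;\succeq\;0,
\end{align*}
since $\mathbf 1\mathbf 1^T$ has largest eigenvalue $d+1$; and the lower bound with $C_1=1/(d+2)$ follows from
\begin{align*}
\hat M_K-\tfrac{1}{d+2}\hat{\bar M}_K \;=\; c_d\,\mathbf 1\mathbf 1^T\;\succeq\;0.
\end{align*}
Summing the two elementwise bounds over $K\in\mathcal{T}$ then yields the lemma with $C_1=1/(d+2)\in(0,1)$.

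The main obstacle is really only bookkeeping: one has to check that the global lumped matrix $\bar M$ assembles as the sum over $K$ of the local lumped matrices (equivalently, that row sums of $M$ coincide with the vertex-quadrature diagonals), and that the affine pull-back produces the same Jacobian factor in both quadratic forms. Once these identifications are in place the result reduces to the elementary spectral identity for $I+\mathbf 1\mathbf 1^T$, and no hypothesis on shape regularity enters the constant.
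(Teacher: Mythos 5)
Your proof is correct, but it takes a genuinely different route from the paper's. The paper treats the two inequalities separately: for the upper bound it sets $\delta M=M-\bar M$ and exploits the sign pattern of a $P_1$ mass matrix (nonnegative off-diagonal entries, zero row sums of $\delta M$) to obtain $\utilde{u}^T\delta M\utilde{u}=-\sum_{i<j}(\delta M)_{ij}(u_j-u_i)^2\le 0$; for the lower bound it invokes the quadrature-error estimate $|(u,u)-(u,u)_h|\le C_l h^2|u|_1^2$ together with an inverse inequality, which produces an unspecified constant $C_1=1/(1+C)$ tied to shape regularity. You instead localize both bounds to a single simplex and reduce them to the spectral comparison $\tfrac{1}{d+2}\hat{\bar M}_K\preceq \hat M_K\preceq \hat{\bar M}_K$, which follows from the explicit formula $\hat M_K=c_d(I+\mathbf 1\mathbf 1^T)$ and the eigenvalues of $\mathbf 1\mathbf 1^T$. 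This is more elementary (no inverse inequality, no quadrature-error lemma) and buys the explicit, sharp constant $C_1=1/(d+2)$ with no dependence on shape regularity. The one point requiring care is the identification you flag yourself: with eliminated Dirichlet nodes the partition of unity $\sum_k\phi_k|_K\equiv 1$ is incomplete on boundary elements, so the row sum of $M$ at a boundary-adjacent node is strictly smaller than the vertex-quadrature weight $\int_\Omega\phi_j\,dx$. This discrepancy only strengthens your lower bound (the quadrature diagonal dominates $\bar M$), but your upper bound as written controls $(M\utilde{u},\utilde{u})$ by the quadrature form rather than by $(\bar M\utilde{u},\utilde{u})$ itself; the paper's row-sum computation closes that small gap directly, since it uses only the definition of $\bar M$ as row sums. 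As the paper makes the same quadrature identification elsewhere, this is a shared and easily repaired imprecision rather than a defect specific to your argument.
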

\begin{proof}
We first show that $({M}\utilde u,\utilde u)\leq (\bar{M}\utilde u,\utilde u)$. Let $\delta M = M - \bar M$, then
$$
(\delta M)_{ij}<0, \ i=j; \quad(\delta M)_{ij} \geq 0, \ i\neq j; \ \ {\rm and} \ \  (\delta M)_{ii}=- \sum_{j\neq i}(\delta M)_{ij}.
$$
By direct calculations, we have
\begin{eqnarray*}
{\utilde{u}}^T\delta M\utilde{u} & = &\sum_{i=1}^{N_h}\sum_{j=1}^{N_h}(\delta M)_{ij} u_i u_j\\
& =& \sum_{i=1}^{N_h}\left({u}_i\sum_{j<i}(\delta M)_{ij}({u}_j-{u}_i)+{u}_i\sum_{j>i}(\delta M)_{ij}({u}_j-{u}_i)\right)\\
& = & \sum_{i=1}^{N_h}\sum_{j>i}-(\delta M)_{ij}({u}_j-{u}_i)^2\leq 0.
\end{eqnarray*}

By Lemma \ref{lumpmass-mass-error} and the inverse inequality, we get
\begin{align*}
(u,u)_h-(u,u)&\leq|(u,u)-(u,u)_h|\leq C_l h^2|u|_1^2\leq C\|u\|_0^2= C(u,u).
\end{align*}
Hence, $ (u,u)_h\leq(1+ C)(u,u)$. Equivalently,
$$
\frac{1}{1+ C}(\bar M\utilde u,\utilde u)\leq (M\utilde u,\utilde u)
$$
which implies that the left inequality holds with $C_1={1}/({1+ C})$ for some positive constant $C$.
\end{proof}

We also need the following estimates for the eigenvalues of the stiffness matrices and mass matrix \cite{fried1973bounds}, \cite{brenner2008book}.
\begin{lemma}[Eigenvalue estimates] \label{spectral4ABM}
Let $A, B$ be the stiffness matrices corresponding to diffusion coefficients $a(x)$ and $b(x)$, respectively, and let $M$ be the mass matrix, then, 
\begin{align}
\lambda_{\max}(A)\leq C_A,\quad \lambda_{\max}(B)\leq C_B,\quad C_Mh^2\leq\lambda_{\min}(M)
\end{align}
where $C_A, C_B$ depend on the continuity assumption of the bilinear forms and the constant in the inverse inequality. 
\end{lemma}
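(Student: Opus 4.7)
The proof will rely on three standard finite element tools: the continuity of the bilinear forms, the inverse inequality on quasi-uniform meshes, and a scaling argument on the reference element. Throughout, for any $\utilde{u}=(u_1,\ldots,u_{N_h})^T$, let $u=\sum_j u_j\phi_j\in\mathbb{V}_\mathcal{T}$ denote the associated finite element function; then by definition
\begin{equation*}
\utilde{u}^T A\utilde{u}=(a\nabla u,\nabla u),\qquad \utilde{u}^T B\utilde{u}=(b\nabla u,\nabla u),\qquad \utilde{u}^T M\utilde{u}=(u,u).
\end{equation*}

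For the bound on $\lambda_{\max}(A)$, I would first invoke continuity of the bilinear form $(a\nabla\cdot,\nabla\cdot)$ using the upper bound $\Lambda_a$ on the coefficient to obtain $\utilde{u}^T A\utilde{u}\leq \Lambda_a|u|_1^2$. Next, I would apply the inverse inequality $|u|_1\leq C_{\mathrm{inv}}h^{-1}\|u\|_0$ valid on shape-regular meshes, yielding $\utilde{u}^T A\utilde{u}\leq C\Lambda_a h^{-2}\|u\|_0^2 = C\Lambda_a h^{-2}(M\utilde{u},\utilde{u})$. Finally, a standard scaling argument on the reference element shows $\lambda_{\max}(M)\leq C h^2$ (in 2D), so $\utilde{u}^T A\utilde{u}\leq C_A\|\utilde{u}\|_2^2$ with $C_A$ depending only on $\Lambda_a$, the shape regularity constant, and $C_{\mathrm{inv}}$. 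The bound on $\lambda_{\max}(B)$ is obtained identically with $b$ in place of $a$.

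For the lower bound $\lambda_{\min}(M)\geq C_M h^2$, the plan is a classical element-wise scaling argument. On the reference triangle $\hat K$, the local mass matrix $\hat M$ assembled from the three reference nodal basis functions $\{\hat\phi_j\}$ is symmetric positive definite, hence there is a fixed constant $\hat c>0$ such that $\hat{\utilde{u}}^T\hat M\hat{\utilde{u}}\geq \hat c\,\|\hat{\utilde{u}}\|_2^2$ for every vector of nodal values $\hat{\utilde{u}}$. Pulling back to a physical element $K\in\mathcal{T}$ via the affine map and using shape regularity gives $\int_K |u|^2\,dx\geq c h^2\sum_{z_j\in K}|u(z_j)|^2$. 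Summing over all $K\in\mathcal{T}$ and using that each vertex belongs to a bounded number of elements then yields $(M\utilde{u},\utilde{u})=\|u\|_0^2\geq C_M h^2\|\utilde{u}\|_2^2$.

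The main obstacle, insofar as there is one, lies in the scaling arguments: one must be careful that the inverse inequality and the element-wise lower bound on $\|u\|_0^2$ both require shape regularity (and, for the inverse inequality, quasi-uniformity) of the triangulation $\mathcal{T}$, which is assumed at the start of the discretization section. Once these hypotheses are in place, the three bounds follow from the assembly of local contributions, and all dependence of $C_A$, $C_B$, $C_M$ on mesh data is absorbed into the shape-regularity and inverse-inequality constants together with the coefficient bounds $\Lambda_a$, $\Lambda_b$.
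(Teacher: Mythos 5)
Your proof is correct and follows essentially the same route the paper intends: the lemma is stated without proof and attributed to \cite{fried1973bounds} and \cite{brenner2008book}, which establish exactly these bounds by the combination you use — coefficient continuity, the inverse inequality, and element-wise scaling of the reference mass matrix — under the shape-regularity and quasi-uniformity hypotheses you correctly flag (and with the $h^2$ scalings being the two-dimensional case relevant to this paper). The only cosmetic remark is that for the lower bound on $\lambda_{\min}(M)$ you need only that each vertex lies in at least one element; the bounded overlap of elements is what gives the companion upper bound $\lambda_{\max}(M)\leq Ch^2$ that you invoke in the stiffness-matrix estimate.
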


\subsection{Eigenvalue analysis for $\mathcal{A}\mathcal{B}^{-1}$ with constraint $\tau \geq Ch^2$ }
 
Recall
\begin{align*}
 \mathcal{B} = 
 \begin{pmatrix}
 \tau A & \bar M\\
\bar M & -\tau B
\end{pmatrix},
\quad
 \mathcal{A} = 
 \begin{pmatrix}
 \tau A &  M\\
 M & -\tau B
\end{pmatrix}.
 \end{align*}
 We have the following spectrum bound for the preconditioned system $\mathcal{B}^{-1}\mathcal{A}$.
 
 \begin{theorem}[Spectral bound for $\mathcal{B}^{-1}\mathcal{A}$] \label{bdyeig}
Let $h$ denote the meshsize and $\tau$ denote the square root of time-step size. Then, the spectral radius of $\mathcal{B}^{-1}\mathcal{A}$ satisfies
$$
\rho(\mathcal{B}^{-1}\mathcal{A})<2,
$$ 
if ${\tau}\geq Ch^2$ for some positive constant $C$ independent of $h$ and $\tau$.
\end{theorem}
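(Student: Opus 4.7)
The plan is to reduce the generalized eigenvalue problem $\mathcal{A}x=\lambda\mathcal{B}x$ to a scalar identity and exploit the fact that, after a block sign flip, $\mathcal{A}$ and $\mathcal{B}$ differ only in their skew-symmetric parts. Setting $J=\operatorname{diag}(I,-I)$ with $J^{-1}=J$, I would work with the equivalent pencil $\mathcal{A}'x=\lambda\mathcal{B}'x$, where $\mathcal{A}':=J\mathcal{A}$ and $\mathcal{B}':=J\mathcal{B}$. Both matrices split as $\mathcal{A}'=\mathcal{D}_{s}+\mathcal{A}'_{a}$ and $\mathcal{B}'=\mathcal{D}_{s}+\mathcal{B}'_{a}$, where the common symmetric part is the SPD block diagonal $\mathcal{D}_{s}=\operatorname{diag}(\tau A,\tau B)$ and the skew parts are built from $M$ and $\bar M$ respectively.

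Next I would take the Hermitian inner product of $(\mathcal{A}'-\lambda\mathcal{B}')x=0$ with an eigenvector $x=(v,u)^{T}$. Because $\mathcal{D}_{s}$ is shared, real, and symmetric while $\mathcal{A}'_{a},\mathcal{B}'_{a}$ are real and skew, this yields the scalar identity $\alpha+i\beta=\lambda(\alpha+i\gamma)$, with $\alpha=\tau(v^{*}Av+u^{*}Bu)$ real and strictly positive for $x\neq 0$ (by positive definiteness of $A,B$), and with $i\beta:=x^{*}\mathcal{A}'_{a}x$, $i\gamma:=x^{*}\mathcal{B}'_{a}x$ purely imaginary. Since $\alpha>0$, one has $\mathcal{B}'x\neq 0$, so $\lambda$ is well defined, and a short algebraic manipulation produces the clean identity
\[
|\lambda-1|^{2}=\frac{(\beta-\gamma)^{2}}{\alpha^{2}+\gamma^{2}}\;\le\;\frac{(\beta-\gamma)^{2}}{\alpha^{2}}.
\]
Since $|\lambda|\le 1+|\lambda-1|$, it suffices to show $|\beta-\gamma|<\alpha$ to conclude $|\lambda|<2$.

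To close the argument I would observe that $\beta-\gamma=2\operatorname{Im}(v^{*}\delta M\,u)$ with $\delta M:=M-\bar M$, and bound $|v^{*}\delta M\,u|\le C_{l}h^{2}|v|_{1}|u|_{1}$ using the quadrature error in Lemma~\ref{lumpmass-mass-error} (the standard decomposition of complex $u,v$ into real and imaginary parts, together with Cauchy--Schwarz in the positive semidefinite form $-\delta M$, covers the complex case). Exchanging $|\cdot|_{1}$ for the $A$- and $B$-energy norms via the coercivity of $a,b$, and using $\alpha^{2}\ge 4\tau^{2}(v^{*}Av)(u^{*}Bu)$ (AM--GM), one arrives at
\[
\frac{|\beta-\gamma|^{2}}{\alpha^{2}}\;\le\;\frac{C_{l}^{2}\,h^{4}}{C_{a}C_{b}\,\tau^{2}}.
\]
Setting $C:=C_{l}/\sqrt{C_{a}C_{b}}$ (inflated slightly to make the inequality strict) and imposing $\tau\ge Ch^{2}$ delivers $|\lambda-1|<1$, and hence $\rho(\mathcal{B}^{-1}\mathcal{A})<2$.

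The main obstacle I anticipate is the complex part of the spectrum, which the indefiniteness of $\mathcal{B}$ certainly permits. A naive symmetric-pencil argument applied directly to $(\mathcal{A},\mathcal{B})$ collapses to $\lambda=1$ for every \emph{real} eigenvector and misses the complex eigenvalues entirely; the $J$-transformation is precisely what isolates them as quotients in which the mass-lumping error enters only through the imaginary pieces $\beta-\gamma$. The degenerate cases $v=0$ or $u=0$ require only a one-line remark: in either case $\beta=\gamma=0$, so the identity forces $\lambda=1$.
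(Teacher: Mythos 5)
Your proposal is correct and takes essentially the same route as the paper's proof: your $J$-transformation is exactly the paper's pairing of the generalized eigenvalue equation with $(v^{T},-u^{T})$, yielding the identical identity $|\lambda-1|^{2}=(\beta-\gamma)^{2}/(\alpha^{2}+\gamma^{2})$, followed by the same quadrature-error bound on $(\delta M v,u)$ from Lemma~\ref{lumpmass-mass-error} and the same Young/AM--GM step relating $\|v\|_{A}\|u\|_{B}$ to $\alpha$. The only differences are cosmetic; in particular your constant $C_{l}/\sqrt{C_{a}C_{b}}$ is the dimensionally consistent form of the paper's $2C_{l}C_{a}C_{b}$.
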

\begin{proof}
Let $\lambda\in\mathbb{C},\  (\utilde v,\utilde u)^{T}\in \mathbb{C}^{2 N_h}$ be a pair of eigenvalue and eigenvector of $\mathcal{B}^{-1}\mathcal{A}$, i.e.
\begin{align*}
\begin{pmatrix}
 \tau A&\bar{M} \\
 \bar{M} & -\tau B
\end{pmatrix}^{-1}
\begin{pmatrix}
 \tau A&{M} \\
 {M} & -\tau B
\end{pmatrix}
\begin{pmatrix}
\utilde v\\
\utilde u
\end{pmatrix}
=
\lambda
\begin{pmatrix}
 \utilde v\\
\utilde u
\end{pmatrix}.
\end{align*}

Equivalently,

\begin{equation}
\begin{pmatrix}
 \tau A&{M} \\
 {M} & -\tau B
\end{pmatrix}
\begin{pmatrix}
\utilde v\\
\utilde u
\end{pmatrix}
=
\lambda
\begin{pmatrix}
 \tau A&\bar M \\
 \bar M & -\tau B
\end{pmatrix}
\begin{pmatrix}
\utilde v\\
\utilde u
\end{pmatrix}.
\label{mass-lumping-preconditioner-e-analysis}
\end{equation}

Taking the inner product of the equation (\ref{mass-lumping-preconditioner-e-analysis}) with $(\utilde v^T,-\utilde u^T)$, we obtain
\begin{equation}
[\tau(A\utilde v,\utilde v)+\tau(B\utilde u,\utilde u)+2\imath{\rm{Im}}(M\utilde v,\utilde u)]=\lambda[\tau(A\utilde v,\utilde v)+\tau(B\utilde u,\utilde u)+2\imath{\rm{Im}}(\bar M\utilde v,\utilde u)].
\end{equation}
Then, we have

\begin{equation}
2\imath{\rm Im}(\delta M\utilde v,\utilde u)=(\lambda-1)[\tau(A\utilde v,\utilde v)+\tau(B\utilde u,\utilde u)+2\imath{\rm{Im}}(\bar M\utilde v,\utilde u)].
\label{e-equality}
\end{equation}
Since $A$ and $B$ are symmetric positive definite matrices, $\|\utilde v\|^2_A=(A\utilde v, \utilde v)$ and $\|\utilde u\|^2_B=(B\utilde u, \utilde u)$ are nonnegative real numbers. By taking the modulus of (\ref{e-equality}) we get
\begin{equation*}
4|{\rm Im}(\delta M\utilde v, \utilde u)|^2=|\lambda-1|^2(\alpha^2+4|{\rm Im}(\bar{M}\utilde v,\utilde u)|^2),
\end{equation*}
where $\alpha=\tau(\|\utilde v\|_{A}^2+\|\utilde u\|_B^2)$. Hence,

\begin{equation}\label{lambdaval}
|\lambda-1|^2=\frac{4|{\rm Im}(\delta M\utilde v, \utilde u)|^2}{\alpha^2+4|{\rm{Im}}(\bar{M}\utilde v,\utilde u)|^2}.
\end{equation}

Let $v={\textstyle\sum}_i v_i\phi_i$, $u={\textstyle\sum}_i u_i\phi_i$, we get
\begin{eqnarray*}
|(\delta M\utilde v,\utilde u)|&\leq& C_l h^2|v|_1|u|_1 \quad \text{(by (\ref{property3-M}))}\\
                     &\leq& C_l C_{a}C_{b}h^2\|\utilde v\|_A\|\utilde u\|_B \quad (C_a=\min_{x}\lambda_a(x),\ C_b=\min_{x}\lambda_b(x))\\
                     &\leq& C_l C_{a}C_{b}h^2\frac{\|\utilde v\|_A^2+\|\utilde u\|^2_B}{2} \quad
                     \text{(by Young's inequality)}\\
                     & = & \frac{C_l C_{a}C_{b}h^2}{2\tau}\tau\left(\|\utilde v\|_A^2+\|\utilde u\|^2_B\right),
\end{eqnarray*}

Hence,
\begin{align*}
4{\rm Im}|(\delta M\utilde v,\utilde u)|^2 \leq\frac{(C_lC_aC_b)^2h^4}{\tau^2}\tau^2(\|\utilde v\|_A^2+\|\utilde u\|_B^2)^2 = \frac{(C_lC_aC_b)^2h^4}{\tau^2}\alpha^2.              
\end{align*}
Let $C = 2C_lC_aC_b$, when $\tau\geq Ch^2$, we get $\rho(\mathcal{B}^{-1}\mathcal{A})<2$.

\end{proof}
Using the same proof of Corollary 5.11 in \cite{bansch2011preconditioning} with $\epsilon_0={1-\rho(\mathcal{B}^{-1}\mathcal{A})/2}$, we have 
\begin{corollary}[Convergence rate of GMRes for $\mathcal{A}\mathcal{B}^{-1}$] \label{GMRes_lump}
For the preconditioned system $\mathcal{A}\mathcal{B}^{-1}$, GMRes method converges with an asymptotic linear convergence rate bounded by
$$
\theta = \frac{\rho(\mathcal{B}^{-1}\mathcal{A})}{2},
$$
if $\tau\geq Ch^2$.
Moreover, 
$$
\frac{\|r_k\|_2}{\|r_0\|_2}\leq C_0\theta^k,
$$
with $C_0=2^{m-1}C_R^m(1+\rho(E)){\rm dim}\mathbb{V}_\mathcal{T}/(1-\rho(E))^m$, $C_R, \ m$ are the parameters defined in the Lemma \ref{p-bound} and $\mathbb{V}_\mathcal{T}$ is the finite element space.
\end{corollary}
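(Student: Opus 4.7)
The plan is to mimic the pseudospectrum-based argument used in Corollary 5.11 of \cite{bansch2011preconditioning}, combining the two pseudospectrum lemmas with the GMRes residual minimization formula (\ref{rate4gmres}). Set $R = \mathcal{A}\mathcal{B}^{-1}$; since $R$ is similar to $\mathcal{B}^{-1}\mathcal{A}$, they share the same spectrum, and Theorem \ref{bdyeig} yields $\rho(R) < 2$ whenever $\tau \geq C h^2$. Define the target asymptotic rate $\theta = \rho(R)/2 \in (0,1)$ and the pseudospectrum parameter $\epsilon_0 = 1 - \theta > 0$ as suggested by the preceding remark.

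Next, I would apply Lemma \ref{p-bound} with $\epsilon = \epsilon_0$ to confine $\sigma_{\epsilon_0}(R)$ inside a union of closed disks $B(\lambda_j, C_R \epsilon_0^{1/m})$ around the eigenvalues of $R$, and take the boundary of this union (after merging overlapping disks, if necessary) as the enclosing contour $\Sigma_{\epsilon_0}$. Then I would construct a degree-$k$ polynomial $p_k$ normalized by $p_k(0)=1$ that is uniformly at most $\theta^k$ on $\Sigma_{\epsilon_0}$. The natural candidate is a product $\prod_j \bigl((\lambda_j - z)/\lambda_j\bigr)^{\alpha_j}$, using that $|\lambda_j| \leq 2\theta$ and that the enclosure radius $C_R \epsilon_0^{1/m}$ is strictly smaller than the distance from $\Sigma_{\epsilon_0}$ to the origin. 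Feeding this polynomial into Lemma \ref{max-poly} gives
\[
\|p_k(R)\|_2 \leq \frac{L_{\epsilon_0}}{2\pi\,\epsilon_0}\,\theta^k,
\]
and substituting into (\ref{rate4gmres}) delivers $\|r_k\|_2/\|r_0\|_2 \leq C_0 \theta^k$. The constant $C_0$ absorbs $L_{\epsilon_0}/(2\pi\epsilon_0)$ together with the Jordan data $m$, $V$ entering $C_R$, and $\dim \mathbb{V}_\mathcal{T}$; after rewriting $\epsilon_0 = 1 - \rho(R)/2 = (1-\rho(E))/\ldots$ where $E := R - I$, the combinatorial rearrangement produces exactly the form $C_0 = 2^{m-1} C_R^m (1+\rho(E))\dim \mathbb{V}_\mathcal{T}/(1-\rho(E))^m$ stated in the corollary.

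The main obstacle is the explicit construction of $p_k$ and verification of the $\theta^k$ bound on $\Sigma_{\epsilon_0}$: the pseudospectrum enclosure shrinks only like $\epsilon_0^{1/m}$ so the geometry of the contour is sensitive to the Jordan structure, and overlapping disks must be handled when several eigenvalues cluster. The choice $\epsilon_0 = 1 - \theta$ is calibrated precisely so that the product polynomial above decays like $\theta^k$ on $\Sigma_{\epsilon_0}$ while preserving $p_k(0) = 1$ with a controlled margin. Once this polynomial estimate is in hand, the remaining bookkeeping that repackages the pseudospectrum prefactor into the stated form of $C_0$ is routine and parallels the calculation in \cite{bansch2011preconditioning} verbatim with our identifications of $R$, $\theta$, and $\epsilon_0$.
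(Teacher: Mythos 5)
Your overall strategy---combining the GMRes minimization property (\ref{rate4gmres}) with the pseudospectrum Lemmas \ref{max-poly} and \ref{p-bound} and the eigenvalue localization from Theorem \ref{bdyeig}---is exactly the route the paper takes: its entire proof is a one-line invocation of Corollary 5.11 of \cite{bansch2011preconditioning} with $\epsilon_0=1-\rho(\mathcal{B}^{-1}\mathcal{A})/2$. However, your reconstruction misassigns the role of $\epsilon_0$, and the resulting step fails. In that argument $\epsilon_0$ is the \emph{target enlargement radius} of the disks around the eigenvalues, not the pseudospectrum parameter: one must choose $\epsilon=(\epsilon_0/C_R)^m$, so that Lemma \ref{p-bound} gives $\sigma_{\epsilon}(R)\subset\cup_{\lambda}B(\lambda,C_R\epsilon^{1/m})=\cup_{\lambda}B(\lambda,\epsilon_0)\subset B(1,\rho(E)+\epsilon_0)$, a single circle of radius $\theta$ about $1$ whose distance to the origin is $1-\theta=\epsilon_0>0$. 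If instead you feed $\epsilon=\epsilon_0=1-\theta$ directly into Lemma \ref{p-bound}, as you propose, the disk radius is $C_R\epsilon_0^{1/m}\geq 2N_h(1-\theta)$, since $C_R=n(1+\sqrt{n-p})\kappa(V)\geq n=2N_h$ and $\epsilon_0^{1/m}\geq\epsilon_0$; for any mesh this exceeds the distance $1-\rho(E)$ from the eigenvalue cluster to the origin, the union of disks then encloses $0$, and by the maximum modulus principle every polynomial with $p_k(0)=1$ satisfies $\max_{\Sigma_{\epsilon_0}}|p_k|\geq 1$, so Lemma \ref{max-poly} yields no decay whatsoever. Your claim that ``the enclosure radius $C_R\epsilon_0^{1/m}$ is strictly smaller than the distance from $\Sigma_{\epsilon_0}$ to the origin'' is precisely the point that is false under your choice of $\epsilon$. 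Indeed, your own target constant betrays the inconsistency: the factor $C_R^m/(1-\rho(E))^m$ in $C_0$ can only arise from $L_\epsilon/(2\pi\epsilon)$ with $1/\epsilon=\bigl(2C_R/(1-\rho(E))\bigr)^m$, i.e., from $\epsilon=(\epsilon_0/C_R)^m$, never from $1/\epsilon_0$.

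Two smaller corrections. First, the product polynomial $\prod_j\bigl((\lambda_j-z)/\lambda_j\bigr)^{\alpha_j}$ is both unnecessary and hard to control uniformly over a union of disks (a factor centered at $\lambda_j$ need not be small near a distant $\lambda_i$). Since Theorem \ref{bdyeig} places \emph{all} eigenvalues in $B(1,\rho(E))$ with $\rho(E)<1$, the single choice $p_k(z)=(1-z)^k$ satisfies $p_k(0)=1$ and $|p_k(z)|\leq(\rho(E)+\epsilon_0)^k=\theta^k$ on the circle $|z-1|=\theta$; this is what makes the bound hold for every $k$ with one fixed contour. Second, with the correct $\epsilon$ the prefactor becomes $L_\epsilon/(2\pi\epsilon)=\theta\,(2C_R)^m/(1-\rho(E))^m=2^{m-1}C_R^m(1+\rho(E))/(1-\rho(E))^m$, which is the stated $C_0$ up to the factor ${\rm dim}\,\mathbb{V}_\mathcal{T}$. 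Once these substitutions are made, the rest of your outline goes through as in \cite{bansch2011preconditioning}.
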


Similarly, we have the following result.
\begin{corollary}[Convergence rate of GMRes for $\mathcal{A}\tilde{\mathcal{B}}^{-1}$] \label{convergence-for-tildeBA}
The spectral radius of $\tilde{\mathcal{B}}^{-1}\mathcal{A}$ satisfies
$$
\rho(\tilde{\mathcal{B}}^{-1}\mathcal{A})<2,
$$
if ${\tau}\geq Ch^2$ for some constant $C$ independent with $h$ and $\tau$. For the preconditioned system $\mathcal{A}\tilde{\mathcal{B}}^{-1}$, GMRes method converges with an approximate linear convergence rate bounded by 
$$
\tilde{\theta} = \frac{\rho(\tilde{\mathcal{B}}^{-1}\mathcal{A})}{2}.
$$
\end{corollary}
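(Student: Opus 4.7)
The plan is to follow the proof of Theorem~\ref{bdyeig} almost verbatim, tracking the asymmetry of $\tilde{\mathcal{B}}$. Writing $\delta M = M - \bar{M}$, the matrices $\tilde{\mathcal{B}}$ and $\mathcal{A}$ agree everywhere except in the $(1,2)$ block. Given a generalized eigenpair $(\lambda, w=(\utilde{v},\utilde{u})^T)$ of $\mathcal{A}w = \lambda \tilde{\mathcal{B}} w$, I would test this identity against the skew vector $(\utilde{v}^T, -\utilde{u}^T)$ exactly as in Theorem~\ref{bdyeig}. The diagonal blocks $\tau A$ and $-\tau B$ contribute the real nonnegative quantity $\alpha := \tau(\|\utilde{v}\|_A^2 + \|\utilde{u}\|_B^2)$ on both sides. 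The off-diagonal blocks of $\mathcal{A}$ still yield the purely imaginary contribution $2\imath\,\mathrm{Im}(M\utilde{v},\utilde{u})$, but the off-diagonal contribution of $\tilde{\mathcal{B}}$ is no longer purely imaginary since its $(1,2)$ and $(2,1)$ blocks now differ by $\delta M$.

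Substituting $M = \bar{M} + \delta M$ in the $(1,2)$ entry of $\tilde{\mathcal{B}}$ and collecting terms, I expect the resulting identity to take the form
$$(\lambda - 1)\,\bigl[\alpha + 2\imath\,\mathrm{Im}(\bar{M}\utilde{v},\utilde{u}) + R(\utilde{v},\utilde{u})\bigr] = S(\utilde{v},\utilde{u}),$$
where $R$ and $S$ are sesquilinear expressions in $\delta M$ whose moduli are each controlled by a constant multiple of $|(\delta M\utilde{v},\utilde{u})|$. This is the analogue of equation~(5.7) in the proof of Theorem~\ref{bdyeig}, with the new feature being the real perturbation $R$ sitting inside the bracket on the left; in the symmetric case the bracket was exactly $\alpha + 2\imath\,\mathrm{Im}(\bar{M}\utilde{v},\utilde{u})$, whose modulus was trivially bounded below by $\alpha$.

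Taking moduli in the identity and using the elementary lower bound
$$\bigl|\alpha + 2\imath\,\mathrm{Im}(\bar{M}\utilde{v},\utilde{u}) + R\bigr| \geq \alpha + \mathrm{Re}(R) \geq \alpha - |(\delta M\utilde{v},\utilde{u})|,$$
the estimate on $|\lambda-1|$ reduces to an estimate of $|(\delta M\utilde{v},\utilde{u})|$. Lemma~\ref{lumpmass-mass-error}, combined with the continuity/coercivity of $a,b$ and Young's inequality exactly as in the proof of Theorem~\ref{bdyeig}, gives $|(\delta M\utilde{v},\utilde{u})| \leq (C_l C_a C_b h^2/(2\tau))\,\alpha$. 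Choosing $C$ slightly larger than the constant in Theorem~\ref{bdyeig} forces $|(\delta M\utilde{v},\utilde{u})|$ to be a sufficiently small fraction of $\alpha$ whenever $\tau \geq Ch^2$, so that $|\lambda - 1| < 1$ and hence $\rho(\tilde{\mathcal{B}}^{-1}\mathcal{A}) < 2$.

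With the spectral bound in hand, the GMRes convergence rate follows exactly as in Corollary~\ref{GMRes_lump}: set $\epsilon_0 := 1 - \rho(\tilde{\mathcal{B}}^{-1}\mathcal{A})/2 > 0$ and apply Lemmas~\ref{max-poly} and~\ref{p-bound} to the $\epsilon_0$-pseudospectrum of $\mathcal{A}\tilde{\mathcal{B}}^{-1}$, producing the linear rate $\tilde\theta = \rho(\tilde{\mathcal{B}}^{-1}\mathcal{A})/2$ in the residual estimate~(\ref{rate4gmres}). The main obstacle beyond Theorem~\ref{bdyeig} is the real perturbation $R$ inside the bracket: the one-line ``modulus of bracket $\geq \alpha$'' of the symmetric case is no longer automatic and must be replaced by the $\alpha - |(\delta M\utilde{v},\utilde{u})|$ estimate above, which compels a marginally larger constant $C$ in the $\tau \geq Ch^2$ threshold. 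Beyond this bookkeeping, no new analytical ingredient is needed.
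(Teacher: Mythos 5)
Your proposal is correct and follows exactly the route the paper intends: the paper offers no separate proof of this corollary (it is stated with ``Similarly''), and the intended argument is precisely the adaptation of Theorem~\ref{bdyeig} you describe, where testing $\mathcal{A}w=\lambda\tilde{\mathcal{B}}w$ against $(\utilde{v}^T,-\utilde{u}^T)$ yields $(\lambda-1)\bigl[\alpha+2\imath\,\mathrm{Im}(\bar{M}\utilde{v},\utilde{u})+\overline{(\delta M\utilde{v},\utilde{u})}\bigr]=-(\delta M\utilde{v},\utilde{u})$. You correctly identify and resolve the only new wrinkle --- the real perturbation in the bracket, handled by the lower bound $\alpha-|(\delta M\utilde{v},\utilde{u})|$ --- and in fact the resulting constraint $|(\delta M\utilde{v},\utilde{u})|\le\alpha/4$ under $\tau\ge 2C_lC_aC_bh^2$ already gives $|\lambda-1|\le 1/3<1$, so even the original constant $C$ suffices.
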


\subsection{Analysis without the constraint $\tau\geq Ch^2$} As we will see in Section \ref{sec:Numerical experiments}, numerical results show uniform convergence rate of GMRes method for the preconditioned system irrespective of the relation between $h$ and $\tau$. A proof of this result appears elusive due to the fact that the Schur complement is nonsymmetric. In this subsection, we give a proof in this direction for the special case $B = \alpha A$ with $\alpha>0$ being some scaling constant in the mass lumping preconditioner $\tilde{\mathcal{B}}$. For simplicity, we choose $\alpha=1$ for the remaining part of this paper.

Notice that 
$$
\begin{pmatrix}
 \tau A&M\\
 \bar{M}&-\tau B 
\end{pmatrix}^{-1}
=
\begin{pmatrix}
 I&-(\tau A)^{-1}M\\
 0&I 
\end{pmatrix}
\begin{pmatrix}
 (\tau A)^{-1}&0\\
 0&(-\tau B-\bar{M}(\tau A)^{-1}M)^{-1} 
\end{pmatrix}
\begin{pmatrix}
 I&0\\
 -\bar{M}(\tau A)^{-1}&I 
\end{pmatrix}
$$

\begin{align*}
\tilde{\mathcal{B}}^{-1}\mathcal{A} &= 
\begin{pmatrix}
 \tau A&M\\
 \bar{M}&-\tau B 
\end{pmatrix}^{-1}
\begin{pmatrix}
 \tau A&{M}\\
 M&-\tau B 
\end{pmatrix}
=\begin{pmatrix}
 I+X&0\\
 (-\tau B-\bar{M}(\tau A)^{-1}M)^{-1}({M}-\bar M)&I 
\end{pmatrix}, \end{align*}
where
$$
X = (\tau A)^{-1}M(\tau B+\bar{M}(\tau A)^{-1}M)^{-1}({M}-\bar M).
$$
Then, any eigenvalue of $\tilde{\mathcal{B}}^{-1}\mathcal{A}$ is either 1 or $1+\lambda$ for some $\lambda \in\sigma(X)$ where $\sigma(X)$ represents the spectrum of $X$.

To derive the bounds of $\sigma(X)$, we first recall the Sherman-Morrison-Woodbury formula
$$
(A+UV^{T})^{-1} = A^{-1}-A^{-1}U(I+V^{T}A^{-1}U)^{-1}V^{T}A^{-1},
$$ 
and the following identity
\begin{equation}
V^{T}(A+UV^T)^{-1}U=(I+(V^{T}A^{-1}U)^{-1})^{-1}.
\label{smw-identity}
\end{equation}
Appying (\ref{smw-identity}), we get
\begin{align*}
X &= (\tau A)^{-1}M(\tau B+\bar{M}(\tau A)^{-1}M)^{-1}\bar{M}[\bar{M}^{-1}({M}-\bar M)]\\[5pt]
   & = \left[I+((\tau A)^{-1}M(\tau B)^{-1}\bar{M})^{-1}\right]^{-1}(\bar{M}^{-1}M-I)\\[5pt]
   & = \left(I+\tau^2(A^{-1}MB^{-1}\bar{M})^{-1}\right)^{-1}(\bar{M}^{-1}M-I)\\
   & = \left(I+\tau^2\bar{M}^{-1}BM^{-1}A\right)^{-1}(\bar{M}^{-1}M-I).
\end{align*}

\begin{lemma}[Spectrum of X] \label{eigenvaluex}
If $A = B$, and $C_1<1$ is the constant in (\ref{property1-M}), then
$$
\sigma(X)\subset(C_1-1,0].
$$
\end{lemma}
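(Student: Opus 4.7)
The plan is to reduce the problem to an eigenvalue bound for a symmetric matrix via a similarity transformation, and then invoke the norm-equivalence Lemma together with the fact that $\tau^2\bar M^{-1}AM^{-1}A$ is similar to a symmetric positive semidefinite matrix.

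First I would substitute $B=A$ into the expression for $X$ derived just before the statement, obtaining
\begin{equation*}
X = \bigl(I+\tau^{2}\bar M^{-1}AM^{-1}A\bigr)^{-1}\bigl(\bar M^{-1}M-I\bigr).
\end{equation*}
Neither factor is symmetric on its own, so I would conjugate by $T:=\bar M^{1/2}$ to pass to the similar matrix
\begin{equation*}
\widehat X := TXT^{-1} = (I+\widetilde Y)^{-1}\widetilde N,
\end{equation*}
where $\widetilde Y := \tau^{2}\bar M^{-1/2}AM^{-1}A\bar M^{-1/2}$ and $\widetilde N := \bar M^{-1/2}M\bar M^{-1/2}-I$. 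The key point is that $\widetilde Y$ is symmetric positive semidefinite (in fact SPD, since $A$ and $M^{-1}$ are SPD) because $AM^{-1}A$ is symmetric, and $\widetilde N$ is symmetric.

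Next I would identify the spectrum of $\widetilde N$. By the norm-equivalence Lemma applied with the substitution $\utilde u\mapsto \bar M^{-1/2}\utilde u$, the Rayleigh quotient $(\bar M^{-1/2}M\bar M^{-1/2}\utilde u,\utilde u)/(\utilde u,\utilde u)$ lies in $[C_{1},1]$, so $\sigma(\widetilde N)\subset[C_{1}-1,0]$; in particular $\widetilde N\leq 0$. Setting $P:=(I+\widetilde Y)^{-1}$, which is SPD with $P\leq I$ (strictly, since $\widetilde Y$ is SPD), the eigenvalues of $\widehat X=P\widetilde N$ coincide with those of the symmetric matrix
\begin{equation*}
P^{1/2}\widetilde N P^{1/2},
\end{equation*}
via the standard similarity $P\widetilde N \sim P^{1/2}\widetilde N P^{1/2}$. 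Hence all eigenvalues of $X$ are real.

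Finally I would bound this symmetric matrix above and below. The upper bound $P^{1/2}\widetilde N P^{1/2}\leq 0$ follows from $\widetilde N\leq 0$. For the lower bound, multiplying $\widetilde N\geq (C_{1}-1)I$ on both sides by $P^{1/2}$ gives $P^{1/2}\widetilde N P^{1/2}\geq (C_{1}-1)P$; since $C_{1}-1<0$ and $P<I$ strictly, one has $(C_{1}-1)P>(C_{1}-1)I$, so $\sigma(X)\subset (C_{1}-1,0]$. The one spot that needs mild care is the strict inequality at the left endpoint, which is where the strict positivity of $\widetilde Y$ (equivalently $\tau>0$ and $A$ SPD) is used; this is the only delicate point, but it is not a real obstacle. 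The genuine use of the hypothesis $B=\alpha A$ is in step two, where it allows $AM^{-1}A$ (rather than $AM^{-1}B$) to be symmetric so that the $\bar M^{1/2}$ similarity produces a symmetric factor; without it, the product $P\widetilde N$ is not a product of symmetric matrices and the argument breaks.
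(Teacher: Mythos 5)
Your proof is correct and is essentially the paper's argument in a different dress: the paper rewrites $X\utilde w=\lambda\utilde w$ as the symmetric generalized eigenproblem $(M-\bar M)\utilde w=\lambda(\bar M+\tau^2AM^{-1}A)\utilde w$ and reads the bounds off the Rayleigh quotient, which is exactly the quotient your conjugations by $\bar M^{1/2}$ and $P^{1/2}$ produce. Both routes rest on the same two facts — symmetrizability (which is where $B=A$ enters) and the norm equivalence $C_1\bar M\leq M\leq\bar M$ combined with positive definiteness of $\tau^2AM^{-1}A$ for the strict left endpoint — so there is nothing to add.
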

\begin{proof}
Let $(\lambda, \utilde w)$ be an eigenpair of the matrix $X$, then

\begin{eqnarray*}
X\utilde w =\lambda \utilde w &\Leftrightarrow& (\bar M^{-1}M-I)\utilde w = \lambda (I+\tau^2 \bar M^{-1}A M^{-1} A)\utilde w\\
&\Leftrightarrow& (M-\bar M)\utilde w = \lambda (\bar M+\tau^2 A M^{-1} A)\utilde w\\
&\Leftrightarrow &\lambda = \frac{((M-\bar M)\utilde w,\utilde w)}{((\bar M+\tau^2 A M^{-1} A)\utilde w,\utilde w)}\\
&\Leftrightarrow& 0\geq\lambda> (C_1-1)\frac{(\bar M\utilde w, \utilde w)}{(\bar M\utilde w,\utilde w)+(\tau^2A M^{-1}A\utilde w,\utilde w)}\\
&\Leftrightarrow& C_1-1<\lambda\leq 0.
\end{eqnarray*}
\end{proof}

By Lemma \ref{eigenvaluex} and the relation between $\tilde{\mathcal{B}}^{-1} \mathcal{A}$ and $X$, we obtain the following result.

\begin{theorem}[Spectrum of $\tilde{\mathcal{B}}^{-1} \mathcal{A}$] \label{eigentBA}
Let $\mathcal{A}$ and $\tilde{\mathcal{B}}$ be given by (\ref{2by2-block-matrix-form}) and (\ref{modified-mass-lumping-preconditioner-block}), respectively. If $A=B$, we have 
$$
\sigma( \tilde{\mathcal{B}}^{-1} \mathcal{A} ) \subset(C_1, 1].
$$
\end{theorem}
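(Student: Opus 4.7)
The plan is to read the spectrum directly off the block structure that was already derived just before the statement of the theorem. Recall that the excerpt exhibits
\begin{equation*}
\tilde{\mathcal{B}}^{-1}\mathcal{A}
=
\begin{pmatrix}
 I+X & 0\\
 (-\tau B-\bar{M}(\tau A)^{-1}M)^{-1}(M-\bar M) & I
\end{pmatrix},
\end{equation*}
which is block lower triangular. Hence its spectrum is the union of the spectra of the two diagonal blocks, so
\begin{equation*}
\sigma\!\left(\tilde{\mathcal{B}}^{-1}\mathcal{A}\right)
=\sigma(I+X)\cup\sigma(I)
=\{1+\lambda:\lambda\in\sigma(X)\}\cup\{1\}.
\end{equation*}

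The second step is to apply Lemma~\ref{eigenvaluex}, which, under the hypothesis $A=B$, gives the sharp inclusion $\sigma(X)\subset(C_1-1,0]$. Shifting by $1$ yields $\{1+\lambda:\lambda\in\sigma(X)\}\subset(C_1,1]$. Since $1$ itself lies in $(C_1,1]$ (recall that $C_1<1$ by the norm-equivalence lemma), combining the two contributions gives
\begin{equation*}
\sigma\!\left(\tilde{\mathcal{B}}^{-1}\mathcal{A}\right)\subset(C_1,1],
\end{equation*}
which is exactly the claim.

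Because the heavy lifting (the Sherman--Morrison--Woodbury rewriting of $X$ and the Rayleigh-quotient bound exploiting $A=B$) has already been done in the pre-Lemma derivation and in Lemma~\ref{eigenvaluex}, the only thing to verify carefully here is that the block-triangular factorization genuinely accounts for \emph{all} eigenvalues (multiplicities included), not just a generating subset. This is the one place I expect any subtlety: I would justify it by noting that for a block-triangular matrix $\bigl(\begin{smallmatrix}P&0\\Q&R\end{smallmatrix}\bigr)$ the characteristic polynomial factors as $\det(\lambda I-P)\det(\lambda I-R)$, independent of the off-diagonal block $Q$, so the identification of $\sigma(\tilde{\mathcal{B}}^{-1}\mathcal{A})$ with $\sigma(I+X)\cup\{1\}$ is exact. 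No further estimates are needed, and the theorem follows.
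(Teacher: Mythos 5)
Your proposal is correct and follows exactly the paper's route: the paper likewise reads the spectrum off the block lower-triangular form of $\tilde{\mathcal{B}}^{-1}\mathcal{A}$ (so every eigenvalue is $1$ or $1+\lambda$ with $\lambda\in\sigma(X)$) and then invokes Lemma~\ref{eigenvaluex} to place $\sigma(X)$ in $(C_1-1,0]$. Your extra remark justifying the determinant factorization for block-triangular matrices is a small, correct addition to what the paper leaves implicit.
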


The following result can be proved by using the same proof for the Corollary 5.11 in \cite{bansch2011preconditioning} with $\epsilon_0 = C_1/2$.

\begin{corollary}[Convergence rate of GMRes for $\mathcal{A}\tilde{\mathcal{B}}^{-1}$]
GMRes's iteration converges for system $\mathcal{A}\tilde{\mathcal{B}}^{-1}$ with an asymptotic linear convergence rate bounded by
$$
\theta = \frac{1-C_1/2}{1+C_1},
$$
 Moreover, 
$$
\frac{\|r_k\|}{\|r_0\|}\leq C_0\theta^k,
$$
with $C_0=2^{2m-1}C^m_R{\rm dim}\mathbb{V}_{\mathcal{T}}/C_1^{m-1}$. 
\end{corollary}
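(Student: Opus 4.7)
The plan is to reproduce the pseudospectrum-based argument of Corollary~\ref{GMRes_lump}, but to replace the spectral-radius input by the tighter localization $\sigma(\tilde{\mathcal{B}}^{-1}\mathcal{A})\subset(C_1,1]$ furnished by Theorem~\ref{eigentBA}. Since $R:=\mathcal{A}\tilde{\mathcal{B}}^{-1}$ is similar to $\tilde{\mathcal{B}}^{-1}\mathcal{A}$, we have $\sigma(R)\subset(C_1,1]$ as well. I set $\epsilon_0:=C_1/2$, so that every eigenvalue of $R$ sits at distance at least $\epsilon_0$ from the origin. Choosing $\epsilon$ with $C_R\epsilon^{1/m}=\epsilon_0$, i.e.\ $\epsilon=(C_1/(2C_R))^m$, Lemma~\ref{p-bound} then gives
$$
\sigma_\epsilon(R)\subset\bigcup_{\lambda\in\sigma(R)}\overline{B(\lambda,\epsilon_0)}\subset\bigl\{z\in\mathbb{C}:\operatorname{Re} z\in[C_1/2,\,1+C_1/2],\ |\operatorname{Im} z|\leq C_1/2\bigr\},
$$
and in particular $0\notin\sigma_\epsilon(R)$.

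Next, I enclose $\sigma_\epsilon(R)$ by a closed contour $\Sigma_\epsilon$, taken as a circle of radius $r$ centered at a real point $c>0$ placed to the right of the inflated spectrum, and use the polynomial $p_k(z)=((c-z)/c)^k$. Since $p_k(0)=1$ and $|p_k|$ attains its maximum on $\Sigma_\epsilon$ at the leftmost point of the circle, I obtain $\max_{z\in\Sigma_\epsilon}|p_k(z)|=(r/c)^k$. Calibrating $(c,r)$ to the geometry of the $\epsilon_0$-inflated interval $(C_1,1]$ so that $\Sigma_\epsilon$ just barely encloses it yields precisely $r/c=\theta=(1-C_1/2)/(1+C_1)$. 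Combining Lemma~\ref{max-poly} with the GMRes minimization property (\ref{rate4gmres}) then produces
$$
\frac{\|r_k\|_2}{\|r_0\|_2}\leq \frac{L_\epsilon}{2\pi\epsilon}\,\theta^k,
$$
and substituting $\epsilon=(C_1/(2C_R))^m$ while bounding $L_\epsilon$ by the perimeter of $\Sigma_\epsilon$ (a bounded multiple of $C_1$) reproduces the claimed prefactor $C_0=2^{2m-1}C_R^m\,\dim\mathbb{V}_\mathcal{T}/C_1^{m-1}$.

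The main obstacle is the third step above: extracting the precise rate $\theta=(1-C_1/2)/(1+C_1)$ rather than a suboptimal alternative. For instance, centering the enclosing disk at the midpoint $(1+C_1)/2$ of $\sigma(R)$ would yield a rate of order $1/(1+C_1)$, while centering at $z=1$ would yield $1-C_1/2$. Recovering the sharper $\theta$ demands an asymmetric placement of the disk that exploits the fact that the $\epsilon_0$-inflation together with the spectral interval have total width $1$ while sitting in the right half-plane with a quantified offset $C_1/2$ from $0$; this is precisely where the detailed bookkeeping from Corollary~5.11 of \cite{bansch2011preconditioning} is invoked, and it is the only step not entirely routine—once the contour and polynomial are fixed, the remaining estimates for $L_\epsilon/(2\pi\epsilon)$ and the factor $\dim\mathbb{V}_\mathcal{T}$ stemming from the Jordan structure of $R$ are mechanical.
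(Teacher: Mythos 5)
Your overall route is the same as the paper's: the paper's entire proof is the single remark that one repeats the argument of Corollary~5.11 of \cite{bansch2011preconditioning} with $\epsilon_0=C_1/2$, i.e.\ exactly the pseudospectrum machinery of Lemmas~\ref{max-poly} and \ref{p-bound} applied to the spectral localization $\sigma(\tilde{\mathcal{B}}^{-1}\mathcal{A})\subset(C_1,1]$ from Theorem~\ref{eigentBA}, which is what you set up in your first two steps (and those steps are fine, including the similarity argument and the choice $\epsilon=(C_1/(2C_R))^m$).

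However, the step you yourself flag as the crux does not go through as described, and this is a genuine gap rather than routine bookkeeping. You claim that an asymmetrically placed circle $\Sigma_\epsilon=\partial B(c,r)$ together with $p_k(z)=((c-z)/c)^k$ can be calibrated so that $r/c=\theta=(1-C_1/2)/(1+C_1)$ while still enclosing the $\epsilon_0$-inflated spectrum. It cannot: any disk $B(c,r)$ containing the inflated region must cover the real segment $[C_1/2,\,1+C_1/2]$, hence $c-r\leq C_1/2$ and $c+r\geq 1+C_1/2$. The first inequality gives $c\leq r+C_1/2$, and combining with the second gives $r\geq 1/2$, whence
$$
\frac{r}{c}\;\geq\;\frac{r}{r+C_1/2}\;\geq\;\frac{1}{1+C_1},
$$
with equality only for the symmetric choice $c=(1+C_1)/2$, $r=1/2$. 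Since $\frac{1}{1+C_1}>\frac{1-C_1/2}{1+C_1}$ for $C_1>0$, no admissible circle-plus-monomial pair produces the stated $\theta$; the two alternative placements you mention in passing give $1/(1+C_1)$ and $1-C_1/2$, both weaker than the claimed rate, and deferring the remaining "bookkeeping" to \cite{bansch2011preconditioning} does not close this. To reach the stated $\theta$ one needs something genuinely different from your contour construction --- a smaller inflation parameter (e.g.\ $\epsilon_0=C_1/4$ makes the symmetric disk yield exactly $(1-C_1/2)/(1+C_1)$), a non-circular contour, or a different polynomial --- or else one must settle for the rate $1/(1+C_1)$, which still proves convergence but is not the constant asserted. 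A secondary slip: with the symmetric disk the arclength is $L_\epsilon=2\pi r=\pi$, not a multiple of $C_1$, so your route to the prefactor $C_0$ also needs revisiting.
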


\begin{remark}[Preconditioning for $\tau < C h^2$] \label{small-tau} By inverse inequality, we have $\tau \|u\|_A^2 \leq \tau h^{-2}\|u\|_M^2$. So if $\tau < h^2$, then $M$ will dominate $A$.
For the case when $A\neq \alpha B$ and $\tau< Ch^2$,  we rewrite the system in the following form
\begin{equation}
\begin{pmatrix}
 M & \tau A\\
 -\tau B & M
\end{pmatrix}
\begin{pmatrix}
 \utilde{u}\\
 \utilde{v}
\end{pmatrix}
=
\begin{pmatrix}
 \utilde{f}\\
 \utilde{g}
\end{pmatrix},
\end{equation}
and consider the diagonal preconditioner
\begin{equation}\label{block-diagonal-preconditioner}
\mathcal{B}_d = 
\begin{pmatrix}
 M & 0\\
 0 & M
\end{pmatrix}.
\end{equation}

To give an estimate of the spectrum bound for the preconditioned system, we let
\begin{equation*}
\begin{pmatrix}
 M & 0\\
 0 & M
\end{pmatrix}^{-1}
\begin{pmatrix}
 M & \tau A\\
 -\tau B & M
\end{pmatrix}
= I +E_d
\end{equation*}
where
\begin{equation}
E_d = 
\begin{pmatrix}
 M & 0\\
 0 & M
\end{pmatrix}^{-1}
\begin{pmatrix}
 0 & \tau A\\
 -\tau B & 0
\end{pmatrix}
\end{equation}
Let $\left(\lambda, (\utilde x^{T}, \utilde y^{T})\right)$ be an eigenpair of $E_d$, i.e.,
\begin{equation*}
\begin{pmatrix}
 0 & \tau A\\
 -\tau B & 0
\end{pmatrix}
\begin{pmatrix}
\utilde x\\
\utilde y
\end{pmatrix}
=\lambda
\begin{pmatrix}
 M & 0\\
 0 & M
\end{pmatrix}
\begin{pmatrix}
\utilde x\\
\utilde y
\end{pmatrix}
\end{equation*}
Taking the inner product of the above equation with the vector $(\utilde x^T, \utilde y^T)$ we obtain 
$$
(\tau A\utilde y,\utilde x) - (\tau B\utilde x, \utilde y) = \lambda \left[(M\utilde x,\utilde x)+(M\utilde y,\utilde y)\right].
$$
Denote $\|\cdot \|_M^2=(M\cdot, \cdot)$, we have
$$
|\lambda| = \frac{\tau\left|(A\utilde y,\utilde x)-(B\utilde x,\utilde y)\right|}{\|\utilde x\|^2_M+\|\utilde y\|^2_M}
$$
Since $M$ is symmetric positive definite,
\begin{eqnarray*}
|(A\utilde y,\utilde x)| &\leq& \left|(M^{-\frac{1}{2}}AM^{-\frac{1}{2}} M^{\frac{1}{2}}\utilde y, M^{\frac{1}{2}}\utilde x)\right|\\
&\leq& \|M^{-\frac{1}{2}}AM^{-\frac{1}{2}}\|\|\utilde y\|_M\|\utilde x\|_M\\
&\leq& \|M^{-\frac{1}{2}}AM^{-\frac{1}{2}} \|\frac{\|\utilde x\|^2_M+\|\utilde y\|^2_M}{2}.
\end{eqnarray*}
Similarly,
$$
|(B\utilde x,\utilde y)| \leq \|M^{-\frac{1}{2}}BM^{-\frac{1}{2}} \|\frac{\|\utilde x\|^2_M+\|\utilde y\|^2_M}{2}
$$
Then, by Lemma \ref{spectral4ABM},
\begin{align*}
|\lambda|&\leq \frac{\tau}{2}(\|M^{-\frac{1}{2}}AM^{-\frac{1}{2}}\|+\|M^{-\frac{1}{2}}BM^{-\frac{1}{2}}\|)\\
&\leq\frac{\tau}{2}\left(\frac{\lambda_{\max}(A)}{\lambda_{\min}(M)}+\frac{\lambda_{\max}(B)}{\lambda_{\min}(M)}\right)\\
&\leq\frac{\tau}{2}\left(\frac{C_A}{C_Mh^2}+\frac{C_B}{C_Mh^2} \right) = \frac{\tau(C_A+C_B)}{2C_Mh^2} < 1,
\end{align*}
if $\tau<Ch^2$, where $C=2C_M/(C_A+C_B)$. Hence, all the eigenvalues of the preconditioned system can be bounded in $B(1, \rho(E_d))$ with $\rho(E_d)<1$.

\end{remark}

%%===remark for small \tau===
%\begin{remark}
%For the situation $0\leq\tau<Ch^2$, it is shown \cite{quarteroni1980mixed} that block Gauss-Seidel is a convergent algorithm  if we reorder the system (\ref{2by2-block-matrix-form}) as
%$$
%\begin{pmatrix}
%M & kA\\
%-B & M
%\end{pmatrix}.
%$$
%\end{remark}

\section{Numerical Experiments}
\label{sec:Numerical experiments}
In this section, we provide numerical experiments to demonstrate the performance of multigrid method and the preconditioned flexible GMRes method (without restart) using mass lumping preconditioners. The MATLAB adaptive finite element package $i$FEM \cite{long2009ifem} is used for all experiments. We choose the following two test problems from \cite{bansch2011preconditioning}. 

Consider the L-shaped domain $\Omega=(-1, 1)^2\backslash [0, 1)^2$, set $g=0$, $f=1$, and choose the Dirichlet boundary condition in both examples. For the diffusion coefficients $a(x_1, x_2),\ b(x_1, x_2)$, we choose
\begin{enumerate}
\item {\textbf{Nice problem}}: 
$$
a(x_1, x_2)=1,\quad b(x_1, x_2)=\left\{\begin{array} {cc} 0.6,\ & \mbox{if}\ x_2<x_1,\\ 1.2, & \mbox{otherwise}.\end{array}\right.
$$

\item {\textbf{Degenerate problem}}: 
$$
a(x_1, x_2)=0.1|x_1|+|x_2|, \quad b(x_1, x_2)=10+3\sin(5\pi x_1)\sin(8\pi x_2).
$$
\end{enumerate}

The range of parameter $\tau\in [10^{-4}, 1]$, which corresponds to the time-step size $\Delta t\in [10^{-8}, 1]$. The stopping criterion of the multigrid solver and GMRes iteration is chosen to be the relative residual error in $L^2$ norm less than $10^{-7}$. The initial guess for the iterative methods are chosen randomly. For time-dependent problems, we can use the approximation in the previous time step as the initial guess and our solvers can converge in just a few steps. All computations are done using MATLAB R2015b on a desktop computer with a 3.7 GHz Intel processor and 32 GB RAM.

\subsection{Results of multigrid solver with collective smoothers}
\label{sec:MGsolver_results}
We compare the performance of multigrid method with collective Gauss-Seidel and collective Jacobi smoother (with damping parameter $\vartheta = 0.8$) for solving (\ref{2by2-block-matrix-form}). We choose this particular Jacobi damping parameter because it gives the best performance compared with other values we have tried. We use multigrid V-cycle with one pre-smoothing and one post-smoothing (i.e., V(1,1)) which achieves the best efficiency in terms of CPU time. The number of iterations and CPU time (in parentheses) are summarized in Tables \ref{MGex1} and \ref{MGex2}. We observe that multigrid methods converge uniformly with respect to $h$. The convergence rate is close to uniform with respect to $\tau$ except for some deterioration when $\tau$ is around $10^{-4}$ or $10^{-5}$ as shown in Table \ref{MGex1}. For very small $\tau$, the block system is dominated by the two mass matrix blocks, and multigrid performs better as shown in Tables \ref{MGex1} and \ref{MGex2}. Although it is not covered by our analysis in Section \ref{sec:convergence}, numerical results in Table \ref{MGex2} show that multigrid methods with collective smoothers are robust for problems with degenerate diffusion coefficient. 

\setlength\extrarowheight{5pt}
\begin{table}
\begin{center}
\caption{Example 1: Iteration counts and CPU time (in seconds) of CGS/CJ-MG}\label{MGex1}
\begin{tabularx}{\linewidth}{m{22mm} * {10}{X}}
\hline
\multirow{2}{*}{solver} & \multirow{2}{*}{$h$} &\multicolumn{8}{c}{$\tau$} \\
                          \cline{3-10}
                                        &   &1e-0&1e-1&1e-2&1e-3&1e-4 & 1e-5 & 1e-6 &1e-7\\ 
\hline
\multirow{3}{*}{CGS-MG} &{$1/64$} &  8 (0.31) &    8 (0.31)  &    8 (0.31)  &    8 (0.31) &    13 (0.49) & 8 (0.31) & 7 (0.27) & 7 (0.27) \\
                                  &{$1/128$} &  8\;\; (1.5)  &   8\;\; (1.5)  & 8\;\; (1.5)   &   8\;\; (1.5)   &   11 (2.0) & 12 (2.2) & 7\;\; (1.3) & 7\;\; (1.3) \\
                                  &{$1/256$} & 8\;\; (6.6)  &   8\;\; (6.6)   &   8\;\; (6.6)    &   8\;\; (6.6)   &   10 (8.3) &  11 (9.1) & 9\;\; (7.5) & 7\;\; (5.9) \\
\hline
\multirow{3}{*}{CJ-MG} &{$1/64$} &15 (0.51) &15 (0.50) &15 (0.51)&15 (0.50)&14 (0.47) & 12 (0.40) & 13 (0.43) & 13 (0.43) \\
			     &{$1/128$} &15 (2.3)&15 (2.2)&15 (2.2)&15 (2.2)&14 (2.0) & 12 (1.8) & 13 (1.9) & 13 (1.9) \\
			     &{$1/256$} &15\;\; (10)&15\;\; (10)&15\;\; (10) & 15\;\; (10) & 15\;\; (10) & 14 (9.6) & 12 (8.2) & 13 (9.0)\\
\hline
\end{tabularx}
\end{center}
\end{table}%

\setlength\extrarowheight{5pt}
\begin{table}
\begin{center}
\caption{Example 2: Iteration counts and CPU time (in seconds) of CGS/CJ-MG}\label{MGex2}
\begin{tabularx}{\linewidth}{m{22mm} * {10}{X}}
\hline
\multirow{2}{*}{solver} & \multirow{2}{*}{$h$} &\multicolumn{8}{c}{$\tau$} \\
                          \cline{3-10}
                                        &   &1e-0&1e-1&1e-2&1e-3&1e-4 & 1e-5 & 1e-6 &1e-7\\ 
\hline
\multirow{3}{*}{CGS-MG} &{$1/64$} &  8 (0.31) &    8 (0.31)  &    8 (0.31)  &    8 (0.31) &    10 (0.38) & 11 (0.42) & 7 (0.27) & 7 (0.27) \\
                                  &{$1/128$} &  8\;\; (1.5)  &   8\;\; (1.5)  &    8\;\; (1.5)   &   8\;\; (1.5)   &   8\;\; (1.5) & 12\;\; (2.2) & 9\;\; (1.7) & 7\;\; (1.3) \\
                                  &{$1/256$} & 8\;\; (6.8)  &   8\;\; (6.7)   &   8\;\; (6.7)    &   8\;\; (6.7)   &   8\;\; (6.8) &  10\;\; (8.4) & 12\;\; (10) & 7\;\; (5.9) \\
\hline
\multirow{3}{*}{CJ-MG} &{$1/64$} &15 (0.53) &15 (0.52) &15 (0.51)&15 (0.54)&14 (0.48) & 12 (0.41) & 14 (0.48) & 13 (0.44) \\
			     &{$1/128$} &15 (2.2) & 15 (2.2) & 15 (2.2) & 15 (2.2) & 14 (2.1) & 13 (1.9) & 13 (1.9) & 14 (2.1) \\
			     &{$1/256$} &15\;\; (11) & 15\;\; (10) & 15\;\; (10) & 15\;\; (10) & 15\;\; (10) & 14\;\; (10) & 12\;\; (8.3) & 14 (8.6)\\
\hline
\end{tabularx}
\end{center}
\end{table}%

\subsection{Results of GMRes with preconditioners solved inexactly by CGS-MG}\label{section-results-cgs-mg}
\label{subsec:6.2}
In this subsection, we compare the performance of the preconditioned GMRes method using three different preconditioners $V_{\mathcal{B}}(1,1), V_{\tilde{\mathcal{B}}}(1,1), V_{\mathcal{A}}(1,1)$. Here, we use $V_{\mathcal{B}}(1,1),\ V_{\tilde{\mathcal{B}}}(1,1), V_{\mathcal{A}}(1,1)$ to represent the inexact preconditioners corresponding to $\mathcal{B},\tilde{\mathcal{B}}$ and $\mathcal{A}$ respectively. For example, $V_{\mathcal{B}}(1,1)$ is one CGS-MG V-cycle with one pre-smoothing and one post-smoothing. As we can see from Tables \ref{GMRes-MG-Vanka-ex1} and \ref{GMRes-MG-Vanka-ex2} that GMRes method converges uniformly with respect to both $h$ and $\tau$. The CPU time for GMRes preconditioned by $V_{\mathcal{B}}(1,1)$ is approximately half of the time when preconditioned by $V_{\mathcal{A}}(1,1)$ which clearly demonstrate the efficiency of mass lumping.

\begin{table}
\caption{Example 1: Iteration counts and CPU time (in seconds) of GMRes with one V(1,1) CGS-MG} 
\begin{center}
\begin{tabularx}{\linewidth}{m{25mm} *7{X}}
\hline
\multirow{2}{*}{preconditioner} & \multirow{2}{*}{$h$} &\multicolumn{5}{c}{$\tau$} \\
                          \cline{3-7}
                                        &   &1e-0&1e-1&1e-2&1e-3&1e-4\\ 
\hline
\multirow{3}{*}{$V_{\mathcal{B}}(1,1)$} & {$1/64$}  &  8 (0.16) &    8 (0.16)  &   8 (0.16)  &   7 (0.14) &   12 (0.25)\\
         					&{$1/128$} &  8 (0.76)  &   8 (0.72)  &    8 (0.72)   &   7 (0.63)   &   8 (0.72)\\
             					& {$1/256$}  & 8 (3.5)  &   8 (3.4)   &   8 (3.4)    &   7 (3.0)   &   7 (3.0)\\
\hline
\multirow{3}{*}{$V_{\tilde{\mathcal{B}}}(1,1)$} & {$1/64$}& 8 (0.24) &    8 (0.21)  &   8 (0.22)  &   7 (0.19) &   9 (0.24)\\						        &{$1/128$} & 8 (1.0)  &   8 (1.0)  &  8 (0.98)   &   7 (0.87)   &   7 (0.86)\\
						&{$1/256$} & 8 (4.4)  &   8 (4.4)   &   8 (4.4)    &   7 (3.9)   &   7 (3.9)\\
\hline						
\multirow{3}{*}{$V_{\mathcal{A}}(1,1)$} & {$1/64$}& 8 (0.33) & 8 (0.33) &8 (0.34) &8 (0.33) &7 (0.29)\\									&{$1/128$} &  8 (1.5) &    8 (1.5)  &    8 (1.5)   &   8 (1.5)   &   8 (1.5)\\
						&{$1/256$} & 8 (6.9)  & 8 (6.9) &   8 (6.9)&  7 (6.1)	&  8 (6.9)\\	
\hline
\end{tabularx}
\end{center}
\label{GMRes-MG-Vanka-ex1}
\end{table}% 

\begin{table}
\caption{Example 2: Iteration counts and CPU time (in seconds) of GMRes with one V(1,1) CGS-MG}
\begin{center}
\begin{tabularx}{\linewidth}{m{25mm} *7{X}}
\hline
 \multirow{2}{*}{preconditioner} & \multirow{2}{*}{$h$} &\multicolumn{5}{c}{$\tau$} \\
                          \cline{3-7}
                                        &   &1e-0&1e-1&1e-2&1e-3&1e-4\\ 
\hline
\multirow{3}{*}{$V_{\mathcal{B}}(1,1)$} & {$1/64$}  & 8 (0.17) &    8 (0.17)  &   8 (0.17)  &   8 (0.17) &   14 (0.30)\\
         					&{$1/128$} &  8 (0.77)  &   8 (0.73)  &    8 (0.73)   &   8 (0.73)   &   9 (0.82)\\
             					& {$1/256$}  & 8 (3.5)  &   8 (3.4)   &   8 (3.4)    &   8 (3.4)   &   7 (3.0)\\
\hline
\multirow{3}{*}{$V_{\tilde{\mathcal{B}}}(1,1)$} & {$1/64$}&8 (0.22) & 8 (0.21)  &   8 (0.21)  &   8 (0.22) &   8 (0.24)\\				       					&{$1/128$} & 8 (1.0)  &   8 (0.98)  &    8 (0.98)   &   8 (0.98)   &   7 (0.86)\\
						       &{$1/256$} &  8 (4.5) & 8 (4.5)   &   8 (4.5)    &   8 (4.5)   &   7 (3.9)\\
\hline
\multirow{3}{*}{$V_{\mathcal{A}}(1,1)$} & {$1/64$}& 8 (0.32) &   8 (0.33) &   8 (0.34) &  8 (0.33) &  9 (0.37)\\								         &{$1/128$} &  8 (1.5) &8 (1.5) &8 (1.5) &8 (1.5) &8 (1.5)\\
						&{$1/256$} & 8 (6.9) &8 (6.9) &8 (6.9) &8 (6.9) &8 (6.9)\\
\hline
\end{tabularx}
\end{center}
\label{GMRes-MG-Vanka-ex2}
\end{table}

\subsection{Results of GMRes with preconditioners solved inexactly by DGS-MG}\label{section-results-dgs-mg}
In this subsection, we present the numerical results for GMRes method using inexact mass lumping preconditioners $V_{\mathcal{B}}(1, 1)$ and $V_{\tilde{\mathcal{B}}}(1, 1)$. Namely, the preconditioner systems $\mathcal{B}$ and $\tilde{\mathcal{B}}$ are solved approximately by one multigrid V-cycle using the decoupled smoother proposed in Section \ref{subsection:mg_dgs} with one pre-smoothing and one post-smoothing. Here, we choose the Jacobi damping parameter to be $0.5$. Our numerical experiments show that the preconditioners have similar performance when the Jacobi damping parameter varies between $0.4$ and $0.6$. As we can see from Tables \ref{GMRes-MG-DGS-ex1} and \ref{GMRes-MG-DGS-ex2} that GMRes converges uniformly when varying $h$ and $\tau$.

 \begin{table}
\caption{Example 1: Iteration counts and CPU time (in seconds) of GMRes with one V(1, 1) DGS-MG}
\begin{center}
\begin{tabularx}{\linewidth}{m{25mm} *7{X}}
\hline
 \multirow{2}{*}{preconditioner} & \multirow{2}{*}{$h$} &\multicolumn{5}{c}{$\tau$} \\
                          \cline{3-7}
                                        &   &1e-0&1e-1&1e-2&1e-3&1e-4\\ 
\hline
\multirow{3}{*}{$V_{\mathcal{B}}(1, 1)$} & {$1/64$}  & 22 (0.28)& 22 (0.28) & 21 (0.26)  & 19 (0.25) & 20 (0.25)\\
         					&{$1/128$} &  22 (1.1) & 22 (1.1) & 21 (1.1) & 20 (1.0) & 19 (0.98)\\
             					& {$1/256$}  & 22 (5.4) & 22 (5.3) & 22 (5.3) & 21 (5.1) & 19 (4.6)\\
\hline
\multirow{3}{*}{$V_{\tilde{\mathcal{B}}}(1, 1)$} & {$1/64$}&22 (0.29) &  22 (0.30)  & 21 (0.28)  & 19 (0.25) & 17 (0.40)\\						                        &{$1/128$} & 22 (1.2) &   22 (1.2)  & 21 (1.1)  & 20 (1.1) & 19 (1.0)\\
						       & {$1/256$} & 22 (5.5) &  22 (5.5)  &  22 (5.5)  &  21 (5.2) & 19 (4.8)\\
\hline
\end{tabularx}
\end{center}
\label{GMRes-MG-DGS-ex1}
\end{table}

\begin{table}
\caption{Example 2: Iteration counts and CPU time (in seconds) of GMRes with one V(1, 1) DGS-MG}
\begin{center}
\begin{tabularx}{\linewidth}{m{25mm} *7{X}}
\hline
\multirow{2}{*}{preconditioner} & \multirow{2}{*}{$h$} &\multicolumn{5}{c}{$\tau$} \\
                          \cline{3-7}
                                        &   &1e-0&1e-1&1e-2&1e-3&1e-4\\ 
\hline
\multirow{3}{*}{$V_{\mathcal{B}}(1, 1)$} & {$1/64$}  & 23 (0.29) &   22 (0.28)  &   21 (0.27)  &  19 (0.24) &  20 (0.25)\\
         					&{$1/128$} &  21 (1.1)  &  22 (1.1)  &  22 (1.1)   &  20 (1.0)   &   18 (0.92)\\
             					& {$1/256$}  & 21 (5.0)  &  21 (5.0)   &  21 (5.0)    &  20 (4.8)   & 18 (4.3)\\
\hline
\multirow{3}{*}{$V_{\tilde{\mathcal{B}}}(1, 1)$} & {$1/64$}& 23 (0.30)& 23 (0.30)& 22 (0.29)& 19 (0.26)& 17 (0.23)\\										&{$1/128$} & 22 (1.2)& 23 (1.2)& 22 (1.2)& 20 (1.1)& 17 (0.94)\\
						       & {$1/256$} &21 (5.3) & 22 (5.5)& 21 (5.3)& 20 (5.0)& 18 (4.5)\\
\hline
\end{tabularx}
\end{center}
\label{GMRes-MG-DGS-ex2}
\end{table}

\subsection{Choice of multigrid parameters for solving preconditioner systems}
In the following, we compare the performance of the preconditioner $\mathcal{B}$ solved by CGS-MG or DGS-MG with different types of cycles and different number of smoothing steps. From Tables \ref{revised_1} - \ref{revised_4}, we can see that increasing the number of smoothing steps can reduce the number of GMRes iterations. However, the CPU time is not always decreasing since more smoothing steps may also increase the computational cost. The GMRes iteration numbers are similar when using W cycle or V cycle. Since W cycle is usually more expensive, its efficiency in terms of CPU time is not as good as V cycle in our experiment. For practical purposes, we suggest using V cycle with one pre- and one post-smoothing step for solving the preconditioner systems discussed in this work. 

\setlength\extrarowheight{5pt}
\begin{table}
\begin{center}
\footnotesize\setlength{\tabcolsep}{2.5pt}
\caption{Example 1: Iteration count, CPU time (in seconds), and convergence factor (italics) of GMRes preconditioned by one $V_
\mathcal{B}(k,k)$ CGS-MG, $k=1,2,3$}\label{revised_1}
\begin{tabularx}{\linewidth}{m{20mm} *7{X}}
\hline
\multirow{2}{*}{precond} & \multirow{2}{*}{$h$} &\multicolumn{5}{c}{$\tau$} \\
                          \cline{3-7}
                                        &   &1e-0&1e-1&1e-2&1e-3&1e-4\\ 
\hline
\multirow{3}{*}{$V_\mathcal{B}(1,1)$} &\multirow{2}{*}{$1/64$} & 8 (0.16) &   8 (0.16)  &    8 (0.16)  &  7 (0.14) &  12 (0.25) \\
& & {\itshape 0.09} & {\itshape 0.09} & {\itshape 0.09} & {\itshape 0.10} & {\itshape 0.23}\\
\cline{2-7}
                                  &\multirow{2}{*}{$1/128$} &  8 (0.76)  &  8 (0.72)  &  8 (0.72)   & 7 (0.63)   &  8 (0.72) \\
& & {\itshape 0.09} & {\itshape 0.09} & {\itshape 0.09} & {\itshape 0.10} & {\itshape 0.12}\\                               
\cline{2-7}
                                  &\multirow{2}{*}{$1/256$} & 8 (3.5)  &  8 (3.4)   &  8 (3.4)    &  7 (3.0)   &  7 (3.0)\\
& & {\itshape 0.09} & {\itshape 0.09} & {\itshape 0.09} & {\itshape 0.09} & {\itshape 0.10}\\                                 
\hline
\multirow{3}{*}{$V_\mathcal{B}(2,2)$} &\multirow{2}{*}{$1/64$} & 6 (0.14) &  6 (0.14)  &   5 (0.12)  & 6 (0.14) &  11 (0.26) \\
& & {\itshape 0.09} & {\itshape 0.09} & {\itshape 0.09} & {\itshape 0.10} &{\itshape 0.23}\\
\cline{2-7}
                                  &\multirow{2}{*}{$1/128$} &  6 (0.67)  &   6 (0.63)  &  5 (0.53)   & 5 (0.53)   &   6 (0.63) \\
& & {\itshape 0.09} & {\itshape 0.09} & {\itshape 0.09} & {\itshape 0.10} & {\itshape 0.12}\\                               
\cline{2-7}
                                  &\multirow{2}{*}{$1/256$} & 6 (3.1)  &  6 (3.0)   &   6 (3.0)    &   5 (2.5)   &  5 (2.5)\\
& & {\itshape 0.09} & {\itshape 0.09} & {\itshape 0.09} & {\itshape 0.09} & {\itshape 0.10}\\     
\hline
\multirow{3}{*}{$V_\mathcal{B}(3,3)$} &\multirow{2}{*}{$1/64$} & 6 (0.16) &   5 (0.14)  &   5 (0.14)  &  5 (0.14) &  11 (0.30) \\
& & {\itshape 0.02} & {\itshape 0.02} & {\itshape 0.04} & {\itshape 0.4} & {\itshape 0.24}\\
\cline{2-7}
                                  &\multirow{2}{*}{$1/128$} &  6 (0.75)  &   5 (0.59)  &   5 (0.59)   & 4 (0.48)   &   6 (0.70) \\
& & {\itshape 0.02} & {\itshape 0.02} & {\itshape 0.02} & {\itshape 0.03} & {\itshape 0.09}\\                               
\cline{2-7}
                                  &\multirow{2}{*}{$1/256$} & 6 (3.5)  &  5 (2.9)   &   5 (2.9)    &   4 (2.3)   &   5 (2.9)\\
& & {\itshape 0.02} & {\itshape 0.02} & {\itshape 0.02} & {\itshape 0.02} & {\itshape 0.05}\\    
\hline
\end{tabularx}
\end{center}
\end{table}

\setlength\extrarowheight{5pt}
\begin{table}
\begin{center}
\footnotesize\setlength{\tabcolsep}{2.5pt}
\caption{Example 1: Iteration count, CPU time (in seconds), and convergence factor (italics) of GMRes preconditioned by one $W_\mathcal{B}(k,k)$ CGS-MG, $k=1,2,3$}\label{revised_2}
\begin{tabularx}{\linewidth}{m{20mm} *7{X}}
\hline
\multirow{2}{*}{precond} & \multirow{2}{*}{$h$} &\multicolumn{5}{c}{$\tau$} \\
                          \cline{3-7}
                                        &   &1e-0&1e-1&1e-2&1e-3&1e-4\\ 
\hline
\multirow{3}{*}{$W_\mathcal{B}(1,1)$} &\multirow{2}{*}{$1/64$} & 8 (0.32) &   8 (0.32)  &   7 (0.28)  &  7 (0.28) &  12 (0.47) \\
& & {\itshape 0.07} & {\itshape 0.07} & {\itshape 0.07} & {\itshape 0.09} & {\itshape 0.23}\\
\cline{2-7}
                                  &\multirow{2}{*}{$1/128$} &  8 (1.2)  &  8 (1.1)  &  7 (1.0)   & 7 (1.0)   &  8 (1.1) \\
& & {\itshape 0.07} & {\itshape 0.07} & {\itshape 0.07} & {\itshape 0.07} & {\itshape 0.12}\\                               
\cline{2-7}
                                  &\multirow{2}{*}{$1/256$} & 8 (5.1)  &  8 (4.9)   &   8 (4.9)    &   7 (4.3)   &   7 (4.3)\\
& & {\itshape 0.07} & {\itshape 0.07} & {\itshape 0.07} & {\itshape 0.07} & {\itshape 0.08}\\                                 
\hline
\multirow{3}{*}{$W_\mathcal{B}(2,2)$} &\multirow{2}{*}{$1/64$} & 6 (0.28) &   6 (0.28)  &   5 (0.23)  &  5 (0.24) &  11 (0.51) \\
& & {\itshape 0.03} & {\itshape 0.03} & {\itshape 0.03} & {\itshape 0.05} & {\itshape 0.24}\\
\cline{2-7}
                                  &\multirow{2}{*}{$1/128$} &  6 (1.0)  &  6 (1.0)  &  5 (0.86)   & 5 (0.87)   &  6 (1.0) \\
& & {\itshape 0.03} & {\itshape 0.03} & {\itshape 0.03} & {\itshape 0.03} & {\itshape 0.08}\\                               
\cline{2-7}
                                  &\multirow{2}{*}{$1/256$} & 6 (4.5)  &  6 (4.4)   &   5 (3.7)    &   5 (3.7)   &   5 (3.7)\\
& & {\itshape 0.03} & {\itshape 0.03} & {\itshape 0.03} & {\itshape 0.03} & {\itshape 0.04}\\     
\hline
\multirow{3}{*}{$W_\mathcal{B}(3,3)$} &\multirow{2}{*}{$1/64$} & 5 (0.27) &   5 (0.27)  &   4 (0.22)  &  5 (0.27) &  11 (0.59) \\
& & {\itshape 0.02} & {\itshape 0.02} & {\itshape 0.02} & {\itshape 0.4} & {\itshape 0.24}\\
\cline{2-7}
                                  &\multirow{2}{*}{$1/128$} &  5 (1.0)  &   5 (0.98)  &   5 (0.98)   & 4 (0.79)   &   6 (1.2) \\
& & {\itshape 0.02} & {\itshape 0.02} & {\itshape 0.02} & {\itshape 0.02} & {\itshape 0.08}\\                               
\cline{2-7}
                                  &\multirow{2}{*}{$1/256$} & 5 (4.2)  &  5 (4.2)   &  5 (4.2)    &  4 (3.4)   &  5 (4.2)\\
& & {\itshape 0.02} & {\itshape 0.02} & {\itshape 0.02} & {\itshape 0.02} & {\itshape 0.03}\\    
\hline
\end{tabularx}
\end{center}
\end{table}

\setlength\extrarowheight{5pt}
\begin{table}
\begin{center}
\footnotesize\setlength{\tabcolsep}{2.5pt}
\caption{Example 1: Iteration count, CPU time (in seconds), and convergence factor (italics) of GMRes preconditioned by one $V_\mathcal{B}(k,k)$ DGS-MG, $k=1,2,3$}
\label{revised_3}
\begin{tabularx}{\linewidth}{m{20mm} *7{X}}
\hline
\multirow{2}{*}{precond} & \multirow{2}{*}{$h$} &\multicolumn{5}{c}{$\tau$} \\
                          \cline{3-7}
                                        &   &1e-0&1e-1&1e-2&1e-3&1e-4\\ 
\hline
\multirow{3}{*}{$V_\mathcal{B}(1,1)$} &\multirow{2}{*}{$1/64$} & 22 (0.28) &   22 (0.28)  &   21 (0.26)  &  19 (0.25) &  20 (0.25) \\
& & {\itshape 0.48} & {\itshape 0.45} & {\itshape 0.44} & {\itshape 0.44} & {\itshape 0.42}\\
\cline{2-7}
                                  &\multirow{2}{*}{$1/128$} &  22 (1.1)  &  22 (1.1)  &  21 (1.1)   & 20 (1.0)   &  19 (0.98) \\
& & {\itshape 0.46} & {\itshape 0.48} & {\itshape 0.44} & {\itshape 0.43} & {\itshape 0.41}\\                               
\cline{2-7}
                                  &\multirow{2}{*}{$1/256$} & 22 (5.4)  &  22 (5.3)   &  22 (5.3)    &   21 (5.1)   &   19 (4.6)\\
& & {\itshape 0.44} & {\itshape 0.44} & {\itshape 0.45} & {\itshape 0.44} & {\itshape 0.43}\\                                 
\hline
\multirow{3}{*}{$V_\mathcal{B}(2,2)$} &\multirow{2}{*}{$1/64$} & 16 (0.25) &  15 (0.23)  &  15 (0.23)  &  14 (0.23) &  16 (0.25) \\
& & {\itshape 0.35} & {\itshape 0.32} & {\itshape 0.31} & {\itshape 0.29} & {\itshape 0.33}\\
\cline{2-7}
                                  &\multirow{2}{*}{$1/128$} &  15 (0.97)  &  15 (0.97)  &  15 (0.96)   & 14 (0.91)   &  13 (0.85) \\
& & {\itshape 0.3} & {\itshape 0.31} & {\itshape 0.33} & {\itshape 0.29} & {\itshape 0.28}\\                               
\cline{2-7}
                                  &\multirow{2}{*}{$1/256$} & 15 (4.4)  &  15 (4.4)   &  15 (4.3)    &  14 (4.1)   &  13 (3.8)\\
& & {\itshape 0.3} & {\itshape 0.3} & {\itshape 0.33} & {\itshape 0.3} & {\itshape 0.29}\\     
\hline
\multirow{3}{*}{$V_\mathcal{B}(3,3)$} &\multirow{2}{*}{$1/64$} & 13 (0.23) &  13 (0.24)  &  13 (0.24)  &  12 (0.23) &  14 (0.25) \\
& & {\itshape 0.26} & {\itshape 0.23} & {\itshape 0.26} & {\itshape 0.24} & {\itshape 0.29}\\
\cline{2-7}
                                  &\multirow{2}{*}{$1/128$} &  13 (0.98)  &  13 (0.97)  &  13 (0.97)   &  12 (0.91)   &   11 (0.84) \\
& & {\itshape 0.24} & {\itshape 0.26} & {\itshape 0.28} & {\itshape 0.24} & {\itshape 0.21}\\                               
\cline{2-7}
                                  &\multirow{2}{*}{$1/256$} & 13 (4.3)  &  13 (4.3)   &  13 (4.3)    &  12 (4.0)   &  11 (3.7)\\
& & {\itshape 0.24} & {\itshape 0.24} & {\itshape 0.25} & {\itshape 0.25} & {\itshape 0.24}\\    
\hline
\end{tabularx}
\end{center}
\end{table}

\setlength\extrarowheight{5pt}
\begin{table}
\begin{center}
\footnotesize\setlength{\tabcolsep}{2.5pt}
\caption{Example 1: Iteration count, CPU time (in seconds), and convergence factor (italics) of GMRes preconditioned by one $W_\mathcal{B}(k,k)$ DGS-MG, $k=1,2,3$}
\label{revised_4}
\begin{tabularx}{\linewidth}{m{20mm} *7{X}}
\hline
\multirow{2}{*}{precond} & \multirow{2}{*}{$h$} &\multicolumn{5}{c}{$\tau$} \\
                          \cline{3-7}
                                        &   &1e-0&1e-1&1e-2&1e-3&1e-4\\ 
\hline
\multirow{3}{*}{$W_\mathcal{B}(1,1)$} &\multirow{2}{*}{$1/64$} & 21 (0.56) &  21 (0.56)  &  20 (0.53)  &  19 (0.51) &  19 (0.51) \\
& & {\itshape 0.41} & {\itshape 0.40} & {\itshape 0.42} & {\itshape 0.41} & {\itshape 0.42}\\
\cline{2-7}
                                  &\multirow{2}{*}{$1/128$} &  21 (1.8)  &  21 (1.9)  & 21 (1.8)   & 19 (1.7)   & 19 (1.7) \\
& & {\itshape 0.40} & {\itshape 0.40} & {\itshape 0.41} & {\itshape 0.41} & {\itshape 0.41}\\                               
\cline{2-7}
                                  &\multirow{2}{*}{$1/256$} & 21 (7.4)  &  21 (7.4)   &  21 (7.4)    & 20 (7.1)   &  19 (6.7)\\
& & {\itshape 0.40} & {\itshape 0.41} & {\itshape 0.41} & {\itshape 0.42} & {\itshape 0.42}\\                                 
\hline
\multirow{3}{*}{$W_\mathcal{B}(2,2)$} &\multirow{2}{*}{$1/64$} & 15 (0.49) & 15 (0.50)  &  14 (0.46)  & 13 (0.43) &  15 (0.50) \\
& & {\itshape 0.26} & {\itshape 0.26} & {\itshape 0.27} & {\itshape 0.27} & {\itshape 0.33}\\
\cline{2-7}
                                  &\multirow{2}{*}{$1/128$} &  15 (1.7)  &  15 (1.6)  &  14 (1.5)   & 13(1.4)   &  13 (1.4) \\
& & {\itshape 0.26} & {\itshape 0.26} & {\itshape 0.27} & {\itshape 0.26} & {\itshape 0.28}\\                               
\cline{2-7}
                                  &\multirow{2}{*}{$1/256$} & 15 (6.5)  & 15 (6.5)   & 14 (6.0)    &  14 (6.1)   &  13 (5.7)\\
& & {\itshape 0.26} & {\itshape 0.26} & {\itshape 0.27} & {\itshape 0.27} & {\itshape 0.26}\\     
\hline
\multirow{3}{*}{$W_\mathcal{B}(3,3)$} &\multirow{2}{*}{$1/64$} & 12 (0.46) & 12 (0.47)  & 12 (0.46)  &  11 (0.43) &  14 (0.53) \\
& & {\itshape 0.19} & {\itshape 0.19} & {\itshape 0.19} & {\itshape 0.20} & {\itshape 0.29}\\
\cline{2-7}
                                  &\multirow{2}{*}{$1/128$} &  12 (1.6)  & 12 (1.6)  & 12 (1.6)   & 11 (1.4)   &  11 (1.4) \\
& & {\itshape 0.19} & {\itshape 0.19} & {\itshape 0.19} & {\itshape 0.20} & {\itshape 0.21}\\                               
\cline{2-7}
                                  &\multirow{2}{*}{$1/256$} & 12 (6.0)  &  12 (6.0)   &  12 (6.0)    &  11 (5.6)   & 11 (5.6)\\
& & {\itshape 0.19} & {\itshape 0.19} & {\itshape 0.19} & {\itshape 0.19} & {\itshape 0.19}\\    
\hline
\end{tabularx}
\end{center}
\end{table}% 

\subsection{Results of GMRes with preconditioners $\tilde{\mathcal{B}}$ and $\mathcal{B}_d$}
The theoretical analysis in Remark \ref{small-tau} shows that when $\tau < Ch^2$, the block diagonal preconditioner $\mathcal{B}_d$ defined by (\ref{block-diagonal-preconditioner}) is also good for GMRes method. In Tables \ref{GMRes-different-tau-ex1} and \ref{GMRes-different-tau-ex2}, we demonstrate the performance of inexact $\mathcal{B}_d$ preconditioner when $\tau$ is very small. We use $GS_{\mathcal{B}_d}(3)$ to represent three steps of Gauss-Seidel relaxation applied to $\mathcal{B}_d$. It can be observed from the numerical results that $GS_{\mathcal{B}_d}(3)$ is very efficient when $\tau$ is very small. However, we would like to point out that from the approximation point of view, $\tau$ should be of order $h^2$. Hence, in practice, a reasonable choice of $\tau$ may not be too small.

 \begin{table}
\caption{Example 1: Iteration counts and CPU time (in seconds) of GMRes with preconditioner $\tilde{\mathcal{B}}$ or ${\mathcal{B}_d}$ ( $*$ means no convergence within $200$ iterations) }
\begin{center}
\begin{tabularx}{\linewidth}{m{25mm} *6{X}}
\hline
\multirow{2}{*}{preconditioner} & \multirow{2}{*}{$h$} &\multicolumn{4}{c}{$\tau$} \\
                          \cline{3-6}
                                        &   &1e-4&1e-5&1e-6&1e-7\\ 
\hline
\multirow{3}{*}{$V_{\tilde{\mathcal{B}}}(1,1)$} 
& $1/64$ &   9 (0.24) &   12 (0.31) & 15 (0.39) & 15 (0.39) \\
&$1/128$ &   7 (0.87)   &   9 (1.1) & 14 (1.7) & 15 (1.8) \\
&$1/256$  &  7 (3.8)   &   8 (4.4) &11 (6.0)  & 15 (8.2) \\
\hline
\multirow{3}{*}{$GS_{\mathcal{B}_d}(3)$} 
& $1/64$ &   164 (0.99) &   19 (0.1) & 6 (0.03) & 4 (0.022) \\
&$1/128$ &   *   &   68 (1.3) & 10 (0.17) & 5 (0.096)  \\
&$1/256$  &  *   &   * & 29 (2.7) & 7 (0.62) \\
\hline
\end{tabularx}
\end{center}
\label{GMRes-different-tau-ex1}
\end{table}

\begin{table}
\caption{Example 2: Iteration counts and CPU time (in seconds) of GMRes with preconditioner $\tilde{\mathcal{B}}$ or ${\mathcal{B}_d}$ ( $*$ means no convergence within $200$ iterations) }
\begin{center}
\begin{tabularx}{\linewidth}{m{25mm} *6{X}}
\hline
\multirow{2}{*}{preconditioner} & \multirow{2}{*}{$h$} &\multicolumn{4}{c}{$\tau$} \\
                          \cline{3-6}
                                        &   &1e-4&1e-5&1e-6&1e-7\\ 
\hline
\multirow{3}{*}{$V_{\tilde{\mathcal{B}}}(1,1)$} 
& $1/64$ &   8 (0.21) &   13 (0.35) & 15 (0.39) & 15 (0.39) \\
&$1/128$ &   7 (0.88)   &   12 (1.5) & 15 (1.8) & 15 (1.8) \\
&$1/256$  &  7 (3.9)   &   8 (4.4) & 14 (7.7)  & 15 (8.2) \\
\hline
\multirow{3}{*}{$GS_{\mathcal{B}_d}(3)$} 
& $1/64$ &   * &   58 (0.35) & 9 (0.043) & 5 (0.026) \\
&$1/128$ &   *   &   * & 25 (0.43) & 7 (0.12)  \\
&$1/256$  &  *   &   * & 96 (10) & 12 (1.0) \\
\hline
\end{tabularx}
\end{center}
\label{GMRes-different-tau-ex2}
\end{table}% 

\subsection{Robustness with respect to other boundary conditions}

In previous numerical experiments, homogeneous Dirichlet boundary conditions are prescribed. In Tables \ref{GMRes-MG-Vanka-ex1-mixed} and \ref{GMRes-MG-DGS-ex1-mixed}, we show that the proposed preconditioners also work well for problems with other boundary conditions. In particular, we consider example 1 defined at the beginning of Section \ref{sec:Numerical experiments} and choose mixed homogeneous Neumann and Dirichlet boundary conditions \ref{bd1} and \ref{bd2} where $\Gamma_N:=\{(x,y) | x=0, y\in(0,1)\}\cup \{(x,y) | x\in(0,1), y=0\}$ and $\Gamma_D:=\partial\Omega\backslash \Gamma_N$. There is a slight increase of GMRes iteration numbers when $\tau$ is large and the preconditioned systems are solved by CGS-MG.

\begin{table}
\caption{Example 1 with mixed boundary conditions: Iteration counts and CPU time (in seconds) of GMRes with one V(1,1) CGS-MG} 
\label{GMRes-MG-Vanka-ex1-mixed}
\begin{center}
\begin{tabularx}{\linewidth}{m{25mm} *7{X}}
\hline
\multirow{2}{*}{preconditioner} & \multirow{2}{*}{$h$} &\multicolumn{5}{c}{$\tau$} \\
                          \cline{3-7}
                                        &   &1e-0&1e-1&1e-2&1e-3&1e-4\\ 
\hline
\multirow{3}{*}{$V_{\mathcal{B}}(1,1)$} & {$1/64$}  &  10 (0.21) &    9 (0.20)  &   8 (0.17)  &   7 (0.15) &   12 (0.26)\\
         					&{$1/128$} &  10 (0.93)  &   10 (0.91)  &    9 (0.83)   &   7 (0.64)   &   8 (0.73)\\
             					& {$1/256$}  & 10 (4.4)  &   10 (4.3)   &   9 (3.9)    &   8 (3.4)   &   7 (3.0)\\
\hline
\multirow{3}{*}{$V_{\tilde{\mathcal{B}}}(1,1)$} & {$1/64$}& 10 (0.26) &   10 (0.26)  &   8 (0.21)  &   7 (0.20) &   9 (0.24)\\						        &{$1/128$} & 10 (1.3)  &   10 (1.3)  &  9 (1.4)   &   7 (0.87)   &   7 (0.88)\\
						&{$1/256$} & 10 (5.7)  &  10 (5.7)   &  9 (5.1)    &  8 (4.6)   &   7 (4.0)\\
\hline						
\multirow{3}{*}{$V_{\mathcal{A}}(1,1)$} & {$1/64$}& 10 (0.40) & 10 (0.40) & 8 (0.33) &8 (0.32) &7 (0.28)\\									&{$1/128$} &  10 (2.0) &    10 (2.0)  &  9 (1.7)   &   8 (1.6)   &   8 (1.6)\\
						&{$1/256$} & 10 (8.8)  & 10 (8.8) &   9 (7.9)&  8 (7.1)	&  8 (7.0)\\	
\hline
\end{tabularx}
\end{center}
\end{table}% 

 \begin{table}
\caption{Example 1 with mixed boundary conditions: Iteration counts and CPU time (in seconds) of GMRes with one W(2,2) DGS-MG}
\label{GMRes-MG-DGS-ex1-mixed}
\begin{center}
\begin{tabularx}{\linewidth}{m{25mm} *7{X}}
\hline
 \multirow{2}{*}{preconditioner} & \multirow{2}{*}{$h$} &\multicolumn{5}{c}{$\tau$} \\
                          \cline{3-7}
                                        &   &1e-0&1e-1&1e-2&1e-3&1e-4\\ 
\hline
\multirow{3}{*}{$W_{\mathcal{B}}(2,2)$} & {$1/64$}  & 15 (0.53)& 15 (0.51)& 15 (0.50)  & 13 (0.44) & 15 (0.50)\\
         					&{$1/128$} &  15 (1.7) & 15 (1.7) & 15 (1.7) & 14 (1.6) & 13 (1.5)\\
             					& {$1/256$}  & 15 (6.6) & 15 (6.6) & 15 (6.6) & 14 (6.2) & 13 (5.8)\\
\hline
\multirow{3}{*}{$W_{\tilde{\mathcal{B}}}(2,2)$} & {$1/64$}&15 (0.58) &    15 (0.56)  & 15 (0.55)  & 13 (0.48) & 12 (0.45)\\						                        &{$1/128$} & 15 (1.9) &    15 (1.9)  &   15 (1.9)  &   14 (1.7) & 13 (1.6)\\
						       & {$1/256$} &15 (7.2) &  15 (7.2)  &   15 (7.2)  &   14 (6.7) &13 (6.3)\\
\hline
\end{tabularx}
\end{center}
\end{table}

\subsection{Spectrum distribution of the preconditioned systems}
In Figures \ref{spectrum4E} and \ref{spectrum4tBA}, we show the spectrum distribution of the preconditioned systems, $\mathcal{B}^{-1}\mathcal{A}$ and $\tilde{\mathcal{B}}^{-1}\mathcal{A}$, respectively, when varying $h$ (left) or $\tau$ (right) (with diffusion coefficients $a(x_1,x_2)=b(x_1,x_2)=1$). We can see that the eigenvalues are more concentrated when $h$ decreases with fixed $\tau$. They become more scattered when $\tau$ gets smaller while $h$ is fixed, but away from zero which means the original system is well conditioned. Moreover, the preconditioned system $\tilde{\mathcal{B}}^{-1}\mathcal{A}$ has more favorable spectral property than $\mathcal{B}^{-1}\mathcal{A}$ for GMRes method which is consistent with the numerical results reported in Sections \ref{section-results-cgs-mg} and \ref{section-results-dgs-mg}.

\begin{figure}
\centering
\includegraphics[width=7.85cm]{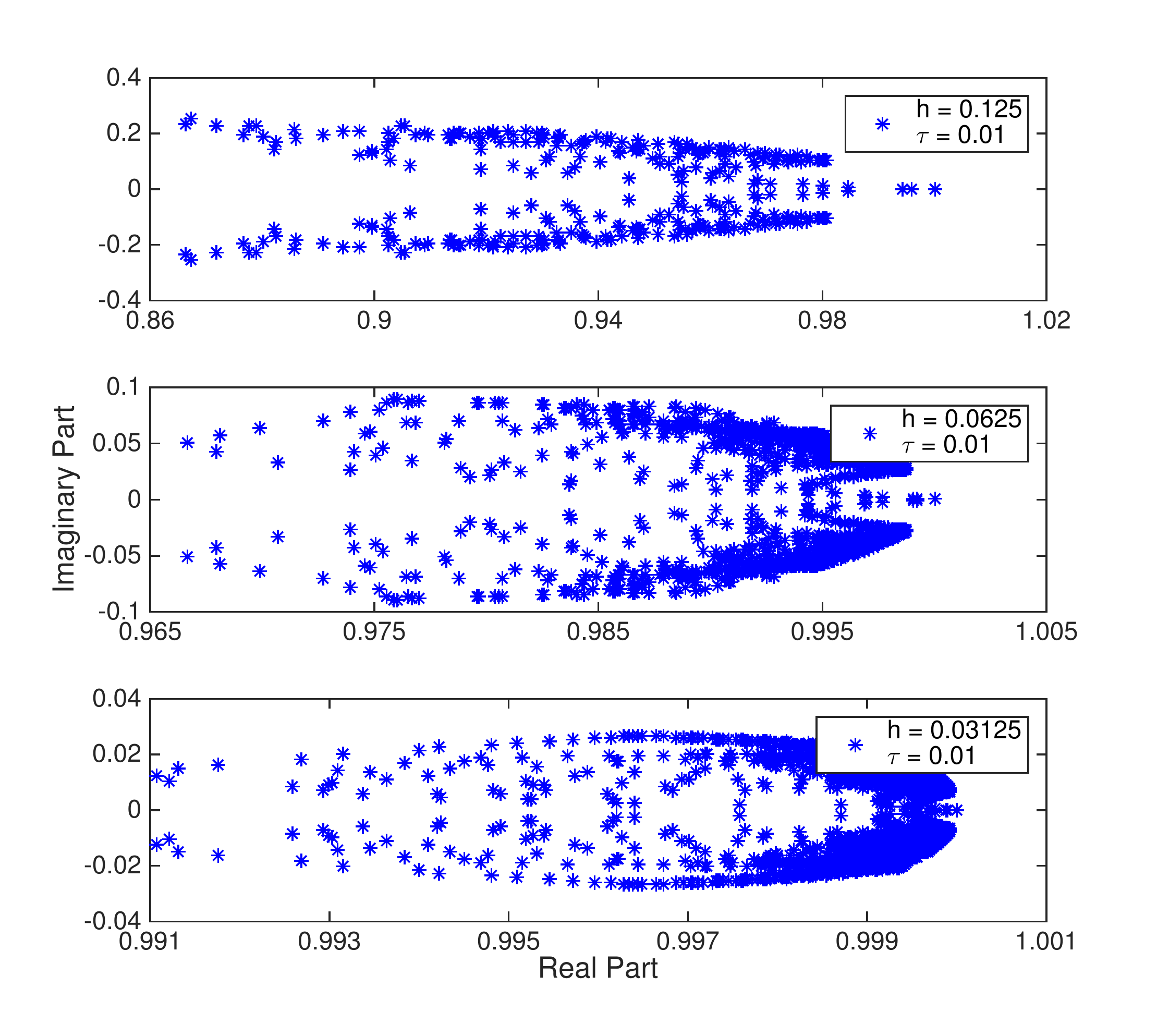}
\hspace{-0.32in}
\includegraphics[width=7.85cm]{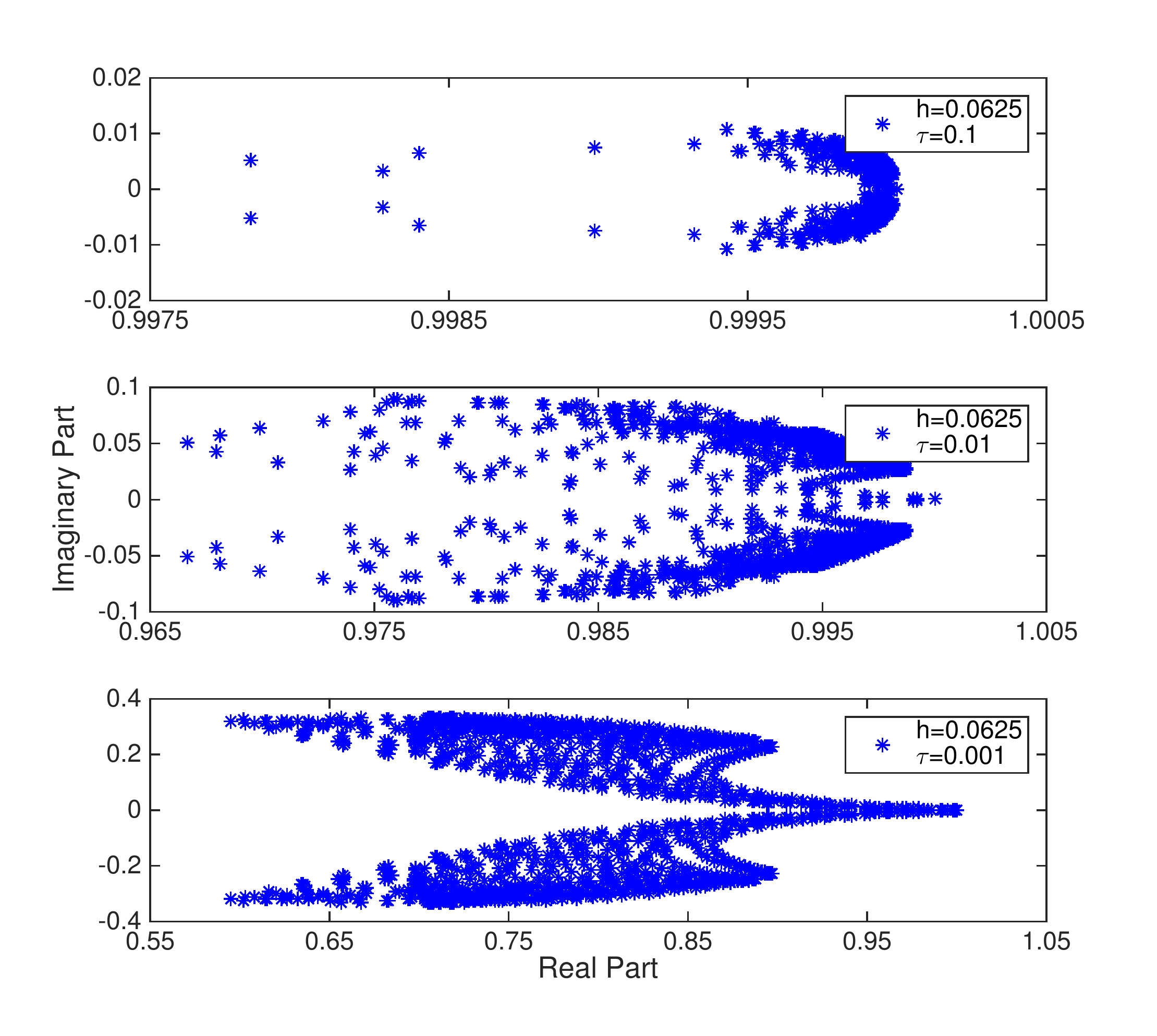}
\caption{Spectrum distribution of $\mathcal{B}^{-1}\mathcal{A}$ when varying $h$ (left) and $\tau$ (right)}
\label{spectrum4E}
\vspace{0cm}
\end{figure}

\begin{figure}
\centering
\includegraphics[width=7.8cm]{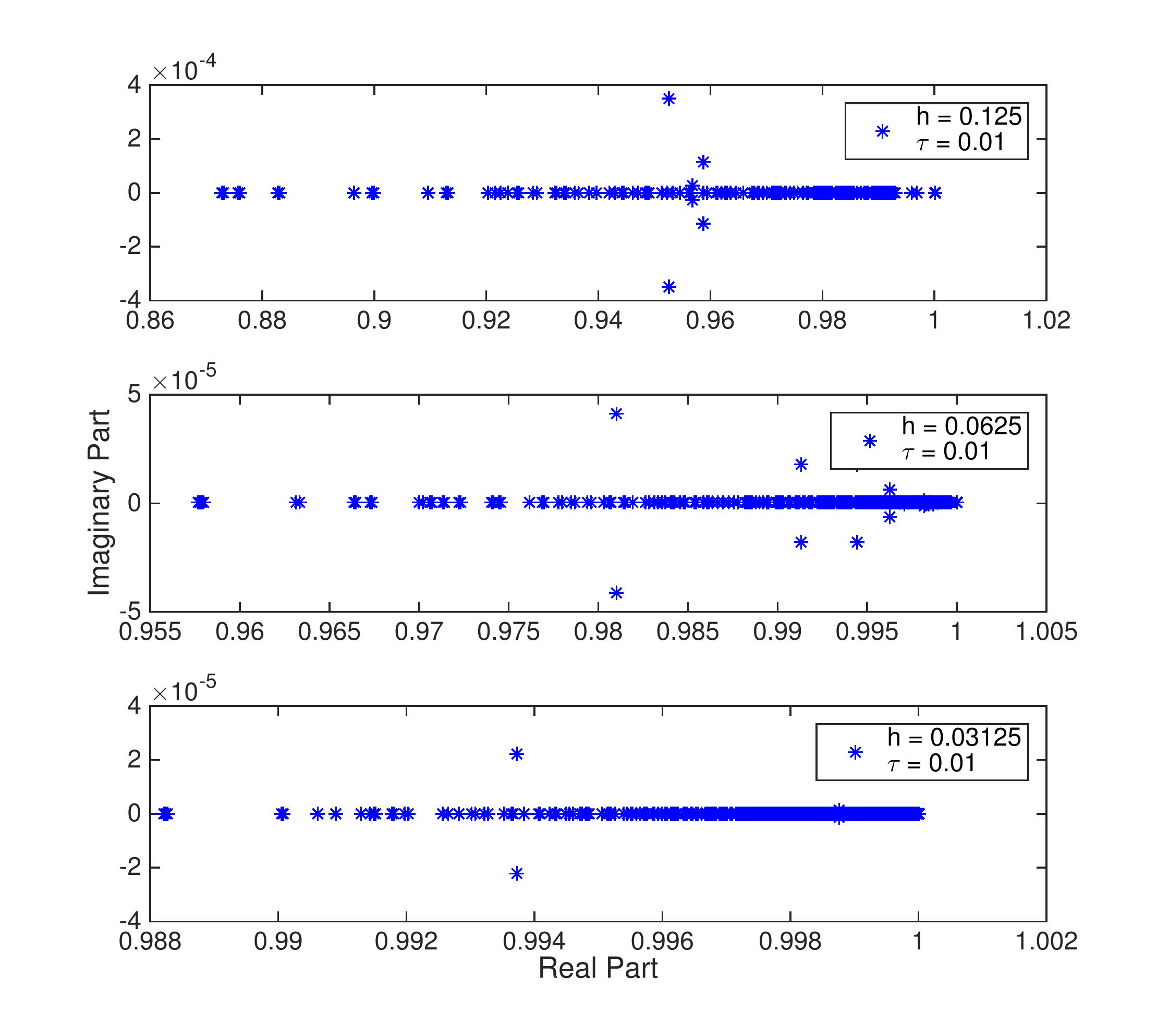}
\hspace{-0.3in}
\includegraphics[width=7.8cm]{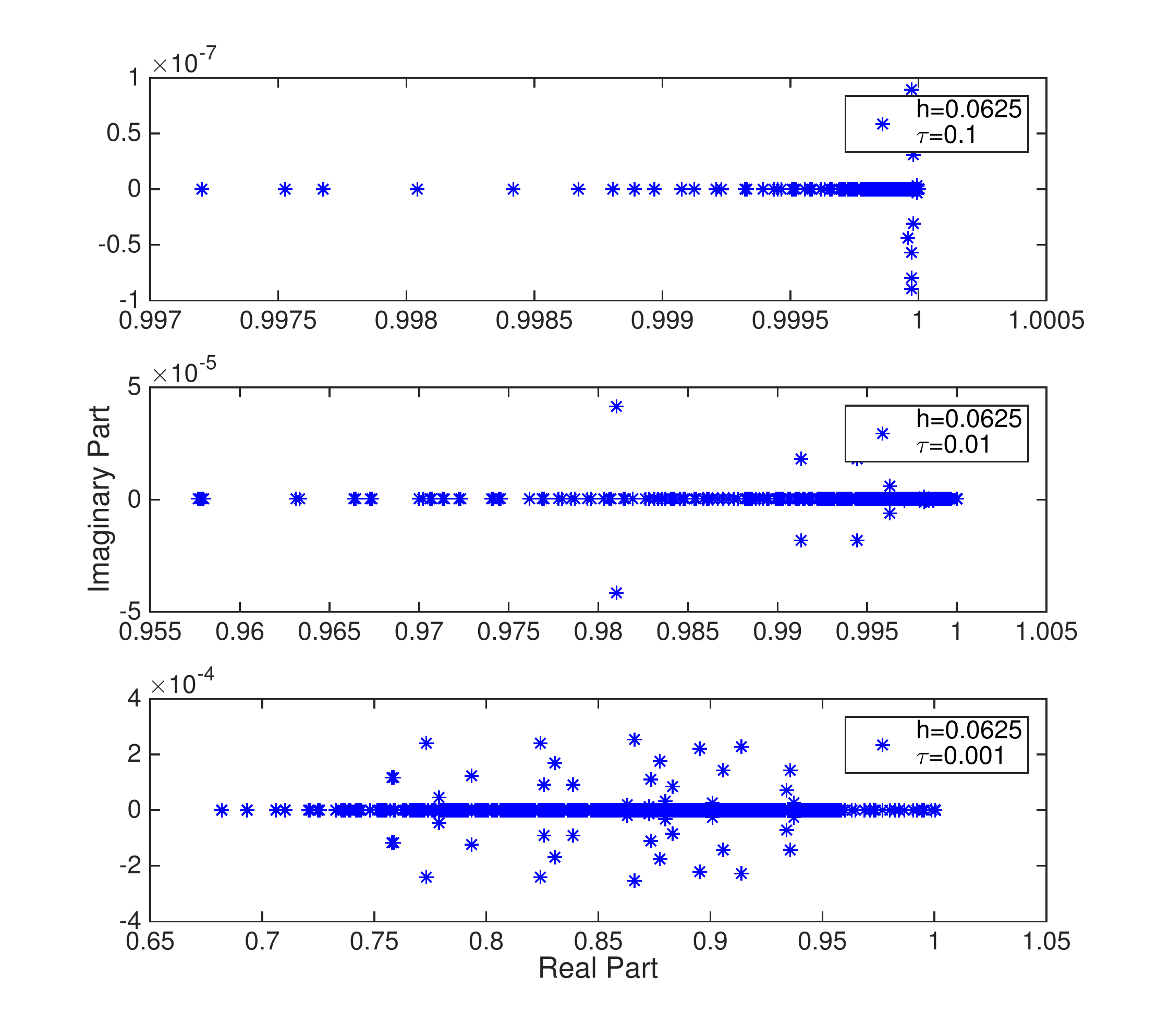}
\caption{Spectrum distribution of $\tilde{\mathcal{B}}^{-1}\mathcal{A}$ when varying $h$ (left) and $\tau$ (right)}
\label{spectrum4tBA}
\vspace{0cm}
\end{figure}

\section{Conclusions}
In this work, we propose mass lumping preconditioners for use with GMRes method to solve a class of two-by-two block linear systems which correspond to some discrete fourth order parabolic equations. Using consistent matrix in the discretization has the advantage of better solution accuracy as compared with applying mass lumping directly in the discretization. We propose to use lumped-mass matrix $\bar{M}$ and optimal multigrid algorithm with either coupled or decoupled smoother to construct practical preconditioners which are shown to be computationally very efficient. Numerical experiments indicate that preconditioned GMRes method converges uniformly with respect to both $h$ and $\tau$ and is robust for problems with degenerate diffusion coefficient. We provide theoretical analysis about the spectrum bound of the preconditioned system and estimate the convergence rate of the preconditioned GMRes method.

\subsection*{Acknowledgements} 
B. Zheng would like to acknowledge the support by NSF grant DMS-0807811 and a Laboratory Directed Research and Development (LDRD) Program from Pacific Northwest National Laboratory. L.P. Chen was supported by the National Natural Science Foundation of China under Grant No. 11501473. L. Chen was supported by NSF grant DMS-1418934 and in part by NIH grant P50GM76516. R. H. Nochetto was supported by NSF under Grants DMS-1109325 and DMS-1411808. J. Xu was supported by NSF grant DMS-1522615 and in part by US Department of Energy Grant DE-SC0014400. Computations were performed using the computational resources of Pacific Northwest National Laboratory (PNNL) Institutional Computing cluster systems. The PNNL is operated by Battelle for the US Department of Energy under Contract DE-AC05-76RL01830.

\bibliographystyle{plain}
\bibliography{Library}
\end{document}